\definecolor{refkey}{rgb}{0,1,0}
\definecolor{labeled}{rgb}{1,0,0}
 \newtheorem{thm}{Theorem}
 \newtheorem{lem}[thm]{Lemma}
 \newtheorem{prop}[thm]{Proposition}
 \theoremstyle{definition}
 \theoremstyle{remark}
 \newtheorem{ex}[thm]{Example}
\newcommand{\eq} [1] {\begin{equation}\label{#1}\quad}
\newcommand{\en} {\end{equation}}
\newcommand{\scal}[1]{\langle#1\rangle}
\newcommand{\norm}[1]{\left\Vert#1\right\Vert}
\newcommand{\abs}[1]{\left\vert#1\right\vert}
\newcommand{\re}{\operatorname{Re}}
\newcommand{\im}{\operatorname{Im}}
\newcommand{\Span}{\operatorname{Span}}
\newcommand{\C}{\mathbb{C}}
\newcommand{\R}{\mathbb{R}}
\newcommand{\diag}{\operatorname{diag}}
\renewcommand{\em}{\it}
\begin{document}
%
%

\title{The Gau--Wu Number for $4\times4$ and Select Arrowhead Matrices\\ \today}

\author[Camenga]{Kristin A. Camenga}
\address{Department of Mathematics\\
Juniata College\\
Huntingdon, PA\\ USA}
\email{camenga@juniata.edu}

\author[Rault]{Patrick X. Rault}
\address{Department of Mathematics\\ University of Nebraska at Omaha\\
Omaha, NE 68182\\
USA}
\email{prault@unomaha.edu}

\author[Spitkovsky]{Ilya M. Spitkovsky}
\address{Division of Science \\ New York University Abu Dhabi (NYUAD)\\ Abu Dhabi 129188\\
United Arab Emirates}
\email{ims2@nyu.edu, imspitkovsky@gmail.com}

\author[Yates]{Rebekah B. Johnson Yates}
\address{Department of Mathematics\\Houghton College\\Houghton, NY 14744\\USA}
\email{rebekah.yates@houghton.edu}

\thanks{The work on this paper was prompted by the authors' discussions during a SQuaRE workshop in May 2013 and continued during a REUF continuation workshop in August 2017, both at the American Institute of Mathematics and supported by the NSF (DMS \#1620073).  The third author [IMS] was also supported by Faculty Research funding from the Division of Science and Mathematics, New York University Abu Dhabi.}

\begin{abstract}
The notion of dichotomous matrices is introduced as a natural generalization of essentially Hermitian matrices. A criterion for arrowhead matrices to be dichotomous is established, along with necessary and sufficient conditions for such matrices to be unitarily irreducible. The Gau--Wu number (i.e., the maximal number $k(A)$ of orthonormal vectors $x_j$ such that the scalar products $\scal{Ax_j,x_j}$ lie on the boundary of the numerical range of $A$) is computed for a class of arrowhead matrices $A$ of arbitrary size, including dichotomous ones. These results are then used to completely classify all $4\times4$ matrices according to the values of their Gau--Wu numbers.
\end{abstract}
\subjclass{Primary 15A60}

\keywords{Numerical range, field of values, $4\times 4$ matrices, Gau--Wu number, boundary generating curve, irreducible, arrowhead matrix, singularity}

\dedicatory{}

\maketitle

\section{Introduction}
The \textit{numerical range} $W(A)$ of an $n\times n$ matrix $A$ is by definition the image of the unit sphere of $\C^n$ under the mapping $f_A\colon x\mapsto \scal{Ax,x}$, where $\scal{.,.}$ is the standard scalar product
on the space $\C^n$.

The set $W(A)$ is compact and convex and has a piecewise smooth boundary $\partial W(A)$. The maximal number $k(A)$
of orthonormal vectors $x_j$ such that $z_j:=f_A(x_j)\in\partial W(A)$ was introduced in \cite{GauWu13} and called
the \textit{Gau--Wu number} of $A$ in \cite{CaRaSeS}.  Clearly, $k(A)\leq n$, and it is not hard to see (\cite[Lemma 4.1]{GauWu13}) that $k(A)\geq 2$. Thus $k(A)=2$ for all $2\times2$ matrices.

A description of matrices for which the bound $k(A)=n$ is attained was given in \cite[Theorem 2.7]{WangWu13}; see also
\cite[Section 4]{CaRaSeS}. In Section~\ref{s:arrow} we derive a more constructive version of this description for the case of arrowhead matrices and also provide conditions sufficient for $k(A)$ to attain its smallest possible value 2. Some preliminary results about these matrices, along with the definition, are provided in Section~\ref{s:irred}. Building on the result for $3\times 3$ matrices from \cite[Proposition 2.11]{WangWu13}, in Section~\ref{s:4x4} we complete the classification of Gau--Wu numbers of arbitrary $4\times4$ matrices, making a step forward.

\section{Unitary (ir)reducibility of arrowhead matrices} \label{s:irred}

An \textit{arrowhead matrix} is a matrix that has all its non-zero entries on the main diagonal, in the last row, and in the last column. In other words, it is a matrix of the form
\eq{ah}
A=\begin{bmatrix}a_1 & 0 & \ldots & 0 & b_1 \\
	0 & a_2 & \ldots & 0 & b_2 \\
	\vdots & \vdots & \ddots & \vdots & \vdots \\
	0 & 0 & \ldots & a_{n-1} & b_{n-1} \\
	c_1 & c_2 & \ldots & c_{n-1} & a_n \end{bmatrix}.
\en
When $n\leq 2$, every matrix is an arrowhead matrix.  We will therefore focus on the situation when $n\geq 3$ in this paper.

Sometimes it will be convenient for us to write \eqref{ah} as
\eq{bah} A=\begin{bmatrix} A_0  & b \\ c & \mu\end{bmatrix},\en
where $A_0 =\diag[a_1,\ldots,a_{n-1}]$, $\mu=a_n$, $b$ is the column $[b_1,b_2,\ldots, b_{n-1}]^T$, and $c$ is the row $[c_1,c_2,\ldots, c_{n-1}]$.

Our first observation is trivial, and we put it in writing (without proof) for convenience of reference only.
\begin{lem} \label{l:eig}Suppose $\lambda$ is an eigenvalue of \eqref{ah} different from its diagonal entries $a_1,\ldots,a_{n-1}$. Then \eq{inter} \sum_{j=1}^{n-1}\frac{b_j c_j}{\lambda-a_j}+a_n-\lambda=0,\en the respective eigenspace is one-dimensional, and it is spanned by the vector $x$ with the
coordinates \eq{eig} x_j=\frac{b_j}{\lambda-a_j}, \ j=1,\ldots, n-1; \quad x_n=1. \en \end{lem}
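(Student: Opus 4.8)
The plan is to work directly from the eigenvalue equation $Ax=\lambda x$ read componentwise, exploiting the sparsity of the arrowhead form \eqref{ah}. Writing $x=[x_1,\ldots,x_n]^T$, the first $n-1$ rows of $Ax=\lambda x$ say that $a_jx_j+b_jx_n=\lambda x_j$, i.e. $(\lambda-a_j)x_j=b_jx_n$ for $j=1,\ldots,n-1$, while the last row says $\sum_{j=1}^{n-1}c_jx_j+a_nx_n=\lambda x_n$. Everything then follows by elementary manipulation of these $n$ scalar equations.

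Since $\lambda\notin\{a_1,\ldots,a_{n-1}\}$ by hypothesis, each of the first $n-1$ equations can be solved for $x_j$, giving $x_j=b_jx_n/(\lambda-a_j)$. In particular, if $x_n=0$ then $x=0$, which is impossible for an eigenvector; hence $x_n\neq 0$, and after rescaling we may take $x_n=1$. This forces $x_j=b_j/(\lambda-a_j)$, i.e. the vector \eqref{eig}, and since every eigenvector for $\lambda$ is thereby seen to be a scalar multiple of this single vector, the eigenspace is one-dimensional. That establishes the second and third assertions simultaneously.

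For the first assertion, substitute $x_j=b_j/(\lambda-a_j)$ and $x_n=1$ into the last-row equation $\sum_{j=1}^{n-1}c_jx_j+a_n=\lambda$; this is exactly \eqref{inter}. There is essentially no obstacle: the one point requiring (minimal) care is the observation that $x_n\neq 0$, which is precisely where the hypothesis $\lambda\neq a_j$ enters. Without it, an eigenvector could be supported on the coordinates with $\lambda=a_j$, and then both the dimension count and the explicit formula \eqref{eig} would break down; with it, the argument is completely routine, consistent with the authors' description of the statement as trivial.
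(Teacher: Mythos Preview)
Your proof is correct and is exactly the natural argument. The paper in fact omits the proof entirely, stating the lemma ``without proof, for convenience of reference only''; your componentwise reading of $Ax=\lambda x$ is the intended (and essentially only) route.
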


Recall that $\lambda$ is a {\em normal eigenvalue} of a given matrix $A$ if there is a corresponding eigenvector $x$
of $A$ which at the same time is an eigenvector of $A^*$; $x$ is then called a {\em reducing eigenvector} of $A$, and it corresponds to the eigenvalue $\overline{\lambda}$ of $A^*$. Of course, a matrix having a normal eigenvalue is unitarily reducible; the converse is true for $n=2$ or 3 but not in higher dimensions.

Arrowhead matrices having at least one normal eigenvalue are characterized by the following statement.

\begin{prop}\label{p:redsuf} The matrix \eqref{ah} has a normal eigenvalue if and only if at least one of the following (not mutually exclusive) conditions holds:\\
{\rm (i)} $b_j=c_j=0$ for at least one  $j\in\{1,\ldots,n-1\}$; \\
{\rm (ii)} $a_i=a_j$ and $b_i\overline{c_j}=b_j\overline{c_i}$ for some $i,j\in\{1,\ldots,n-1\}$, $i\neq j$;\\
{\rm (iii)} $a_i=a_j=a_k$ for some triple of distinct $i,j,k\in\{1,\ldots.n-1\}$; \\
{\rm (iv)} $\abs{b_j}=\abs{c_j}\neq 0$ for all $j=1,\ldots,n-1$, and the lines $\ell_j$ passing through $a_j$ and forming the angle $(\arg{b_j}+\arg{c_j})/2$  with the positive real half-axis either all coincide or all intersect at (then defined uniquely) $\lambda$ satisfying \eqref{inter}.
\end{prop}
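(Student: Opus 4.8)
\emph{Overall plan.} The assertion is a biconditional, so I would prove the two implications separately; in each I would distinguish whether the normal eigenvalue $\lambda$ is one of the diagonal entries $a_1,\dots,a_{n-1}$ or not. The workhorse is Lemma~\ref{l:eig}, applied to both $A$ and $A^*$. Note that $A^*$ is again an arrowhead matrix: in the block notation \eqref{bah} it equals $\begin{bmatrix}\overline{A_0}&c^*\\ b^*&\overline{\mu}\end{bmatrix}$, so its diagonal is $\overline{a_1},\dots,\overline{a_{n-1}},\overline{a_n}$, its last column is $(\overline{c_1},\dots,\overline{c_{n-1}})^{T}$ and its last row is $(\overline{b_1},\dots,\overline{b_{n-1}})$; moreover equation \eqref{inter} written for $A^*$ at $\overline{\lambda}$ is just the complex conjugate of \eqref{inter} written for $A$ at $\lambda$. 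Consequently, for $\lambda\notin\{a_1,\dots,a_{n-1}\}$, ``$\lambda$ is a normal eigenvalue of $A$'' amounts to: \eqref{inter} holds, and the eigenvector $x$ of Lemma~\ref{l:eig} (with $x_j=b_j/(\lambda-a_j)$, $x_n=1$) coincides with the corresponding eigenvector $y$ of $A^*$ at $\overline{\lambda}$ (with $y_j=\overline{c_j}/(\overline{\lambda}-\overline{a_j})$, $y_n=1$), since each of those one-dimensional eigenspaces is spanned by the displayed vector. Comparing $x_j$ with $y_j$ coordinatewise, the moduli force $\abs{b_j}=\abs{c_j}$ for every $j$, and when this common value is nonzero the arguments force $2\arg(\lambda-a_j)\equiv\arg b_j+\arg c_j\pmod{2\pi}$, i.e.\ $\lambda\in\ell_j$. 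This modulus/argument translation is the only genuine computation in the proof, and it is brief; the rest is bookkeeping.

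\emph{Sufficiency.} If (i) holds then $e_j$ is a common eigenvector of $A$ and $A^*$, so $a_j$ is a normal eigenvalue. For (iii) — and the same works for (ii) — I would look for a reducing eigenvector in $\Span\{e_i,e_j,e_k\}$ (resp.\ $\Span\{e_i,e_j\}$) with eigenvalue $a_i=a_j\,(=a_k)$: for $x=\sum\alpha_m e_m$ one finds $Ax=a_ix+\bigl(\sum\alpha_m c_m\bigr)e_n$ and $A^*x=\overline{a_i}\,x+\bigl(\sum\alpha_m\overline{b_m}\bigr)e_n$, so such an $x$ is a reducing eigenvector exactly when the homogeneous system $\sum\alpha_m c_m=0$, $\sum\alpha_m\overline{b_m}=0$ has a nontrivial solution. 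With three unknowns this is automatic (case (iii)); with two it holds iff $c_i\overline{b_j}=c_j\overline{b_i}$, which after conjugation is the equality in (ii). Finally, if (iv) holds, let $\lambda$ be a common point of the lines $\ell_j$ at which \eqref{inter} is satisfied — its existence is exactly what (iv) asserts. Since $b_jc_j\neq0$ for all $j$, such a $\lambda$ is automatically distinct from every $a_j$, so $\lambda$ is an eigenvalue of $A$ with eigenvector $x$ as in \eqref{eig} and (by the conjugate equation) $\overline{\lambda}$ is an eigenvalue of $A^*$ with the analogous vector $y$; reversing the modulus/argument computation of the first paragraph yields $x=y$, hence $\lambda$ is a normal eigenvalue.

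\emph{Necessity.} Suppose $\lambda$ is a normal eigenvalue with reducing eigenvector $x$. If $\lambda\notin\{a_1,\dots,a_{n-1}\}$, the first paragraph gives $\abs{b_j}=\abs{c_j}$ for all $j$: if some $b_j=c_j=0$ then (i) holds, and otherwise all these numbers are nonzero, $\lambda$ lies on every $\ell_j$ and satisfies \eqref{inter}, which is (iv). If instead $\lambda=a_{j_0}$ for some $j_0\le n-1$, I would read $x$ off from $Ax=a_{j_0}x$ and $A^*x=\overline{a_{j_0}}x$ directly. If $b_{j_0}=c_{j_0}=0$, (i) holds. Otherwise one of $b_{j_0},c_{j_0}$ is nonzero, so the $j_0$-th row of the $A$-equation (or of the $A^*$-equation) forces $x_n=0$; then each row with $a_i\neq a_{j_0}$ forces $x_i=0$, so the nonzero vector $x$ is supported on $S:=\{\,i\le n-1:a_i=a_{j_0}\,\}$, and the last rows yield $\sum_{i\in S}c_ix_i=0=\sum_{i\in S}\overline{b_i}x_i$. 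Now $\abs S\ge3$ is (iii); $\abs S=2$, say $S=\{j_0,j_1\}$, forces the determinant $c_{j_0}\overline{b_{j_1}}-c_{j_1}\overline{b_{j_0}}$ to vanish (otherwise $x=0$), which after conjugation is (ii); and $\abs S=1$ would force $x=0$, impossible.

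\emph{Main obstacle.} The hardest part is not any single computation but making the case analysis airtight: verifying that the four alternatives are genuinely exhaustive, matching case (iv) correctly against all geometric configurations of the lines $\ell_j$ (all parallel with empty common intersection, all coincident, or meeting in a single point, and the possibility that a common point happens to equal some $a_j$), and confirming that the ``$x_n=0$'' branch of the $\lambda=a_{j_0}$ analysis always terminates in one of (i), (ii), (iii). The modulus/argument identification ``$\lambda\in\ell_j$ if and only if $x_j=y_j$'' (given $\abs{b_j}=\abs{c_j}\neq0$) is the one computational ingredient, and it is short.
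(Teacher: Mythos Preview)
Your proof is correct and follows essentially the same approach as the paper's. The only difference is organizational: the paper first proves that a reducing eigenvector with last coordinate zero exists if and only if (i)$\vee$(ii)$\vee$(iii), and then (assuming (i) fails) handles the $x_n\neq 0$ case via Lemma~\ref{l:eig}, whereas you split instead on whether $\lambda\in\{a_1,\dots,a_{n-1}\}$ and treat sufficiency and necessity separately; the underlying computations (the linear system on $\Span\{e_i:a_i=\lambda\}$ for the $x_n=0$ branch, and the coordinatewise comparison $b_j/(\lambda-a_j)=\overline{c_j/(\lambda-a_j)}$ for the $x_n\neq 0$ branch) are identical to the paper's.
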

\begin{proof} First we will prove that $A$ has a normal eigenvalue $\lambda$ associated with a respective eigenvector $x$, the last coordinate of which is equal to zero, if and only if (i), (ii), or (iii) holds.
	
Indeed, for such $x$,  conditions \eq{ne} Ax=\lambda x \text{ and } A^*x=\overline{\lambda}x \en can be rewritten as the homogeneous system of linear equations
	\eq{sys1} \begin{aligned} (a_j-\lambda) x_j & =  0, &  j=1,\ldots, n-1, \\
	\sum_{j=1}^{n-1}c_j x_j & =  0,  &
	\sum_{j=1}^{n-1}\overline{b}_j x_j  =  0.  \end{aligned} \en

For $\lambda$ different from all $a_1,\ldots,a_{n-1}$, the first $n-1$ equations imply that $x=0$, and so these $\lambda$'s cannot be normal eigenvalues.

If $\lambda=a_j$ for exactly one $j$,  we arrive at a non-trivial solution if and only if the respective $b_j,c_j$ are equal to zero. This is case (i).

If $\lambda=a_i=a_j\neq a_k$ for some $i\neq j$ and all other $k$, with $i,j,k\in\{1,\ldots,n-1\}$, then the first line of \eqref{sys1} is equivalent to $x_k=0$ if $k\neq i,j$, and a non-trivial solution of \eqref{sys1} exists if and only if this is true for  \[
c_ix_i+c_jx_j=0, \text{ and } \overline{b_i}x_i+\overline{b_j}x_j=0, \]
i.e., if and only if we are in case (ii).

Finally, if $a_i=a_j=a_k$ with distinct $i,j,k$---that is, we are in case (iii)---we have that a non-trivial solution $\{x_i,x_j,x_k\}$ to
\[
c_ix_i+c_jx_j+c_kx_k=0, \text{ and } \overline{b_i}x_i+\overline{b_j}x_j+\overline{b_k}x_k=0, \]
with all other coordinates of $x$ set to zero, delivers a non-trivial solution to \eqref{sys1}.

In what follows, we may suppose that (i) does not hold, i.e., for every $j=1,\ldots,n-1$ at least one of the off diagonal entries $b_j,c_j$ is different from zero. Equations \eqref{ne} then imply that
\[ a_jx_j+b_jx_n=\lambda x_j, \quad \overline{a_j}x_j+\overline{c_j}x_n=\overline{\lambda}x_j,\quad j=1,\ldots, n-1.\]	
The reducing eigenvector $x$ with $x_n\neq 0$ may therefore correspond only to an eigenvalue different from $a_1,\ldots,a_{n-1}$. Applying Lemma~\ref{l:eig} (also with $A$ replaced by $A^*$) 	we reach the conclusion that \eqref{ne} holds if and only if \eqref{inter} is satisfied, and
\[ b_j/(\lambda-a_j)=\overline{c_j/(\lambda-a_j)}, \quad j=1,\ldots,n-1.\]
Equivalently, \[ \abs{b_j}=\abs{c_j} \text{ and } \arg(\lambda-a_j)=(\arg b_j+\arg c_j)/2\mod \pi. \]
This is in turn equivalent to (iv). 	
\end{proof}

From the proof of Proposition~\ref{p:redsuf} it is clear that ${\rm (i)}\vee{\rm (ii)}\vee{\rm (iii)}$ holds if and only if there exists a joint eigenvector of $A$ and $A^*$ orthogonal to $e_n$. Interestingly, the absence of such eigenvectors implies certain restrictions on reducing subspaces of $A$.

\begin{lem}\label{l:red}
Let an $n\times n$ arrowhead matrix $A$ be such that its reducing eigenvectors (if any) are not orthogonal to $e_n$. Then a reducing subspace of $A$ that contains at least one of the standard basis vectors $e_j$ coincides with the whole space $\C^n$.
\end{lem}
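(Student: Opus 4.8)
The plan is to show that a reducing subspace $M\subseteq\C^n$ of $A$ which contains some standard basis vector must contain $e_n$, then all of $e_1,\dots,e_{n-1}$, and hence equal $\C^n$. First note that, by the remark following Proposition~\ref{p:redsuf}, the hypothesis is precisely that conditions (i)--(iii) all fail; in particular (i) fails, so $(b_j,c_j)\neq(0,0)$ for every $j\in\{1,\dots,n-1\}$. If $e_j\in M$ for some $j\le n-1$, then $M$ being invariant under both $A$ and $A^*$ forces $Ae_j=a_je_j+c_je_n\in M$ and $A^*e_j=\overline{a_j}e_j+\overline{b_j}e_n\in M$; removing the $e_j$-components gives $c_je_n\in M$ and $\overline{b_j}e_n\in M$, and since one of $b_j,c_j$ is nonzero we conclude $e_n\in M$. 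So from now on we may assume $e_n\in M$.

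Next I would extract the ``arrow'' data. Invariance gives $Ae_n=\sum_{j<n}b_je_j+\mu e_n\in M$ and $A^*e_n=\sum_{j<n}\overline{c_j}e_j+\overline{\mu}e_n\in M$, so (subtracting multiples of $e_n$) the vectors $u:=\sum_{j<n}b_je_j$ and $v:=\sum_{j<n}\overline{c_j}e_j$ lie in $M$. Applying $A$ and again discarding the $e_n$-component, $\sum_{j<n}a_jb_je_j\in M$, and inductively $\sum_{j<n}a_j^{\,k}b_je_j\in M$ for all $k\ge 0$; likewise with $b_j$ replaced by $\overline{c_j}$. Forming complex linear combinations and using Lagrange interpolation at the distinct values among $a_1,\dots,a_{n-1}$, we obtain, for each such value $\alpha$, the vectors $\sum_{j\in J_\alpha}b_je_j$ and $\sum_{j\in J_\alpha}\overline{c_j}e_j$ in $M$, where $J_\alpha=\{j\le n-1:a_j=\alpha\}$.

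It remains to produce each $e_j$. Fix $\alpha$; since (iii) fails, $|J_\alpha|\le 2$. If $J_\alpha=\{j\}$, then one of $b_je_j$, $\overline{c_j}e_j$ is a nonzero multiple of $e_j$ (as $(b_j,c_j)\neq(0,0)$), so $e_j\in M$. If $J_\alpha=\{i,j\}$ with $i\neq j$, then $M$ contains $b_ie_i+b_je_j$ and $\overline{c_i}e_i+\overline{c_j}e_j$, whose determinant in the coordinates $(e_i,e_j)$ equals $b_i\overline{c_j}-b_j\overline{c_i}$; this is nonzero because (ii) fails while $a_i=a_j$, so $\Span\{e_i,e_j\}\subseteq M$. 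Either way $e_j\in M$ for all $j\in J_\alpha$, so ranging over all values $\alpha$ yields $e_1,\dots,e_{n-1}\in M$, and together with $e_n\in M$ this gives $M=\C^n$. I expect the only delicate point to be this last step when a diagonal value is repeated: the diagonal part of $A$ resolves $u$ and $v$ only up to the eigenvalue-groups $J_\alpha$, so recovering both $e_i$ and $e_j$ when $a_i=a_j$ hinges on the nondegeneracy $b_i\overline{c_j}\neq b_j\overline{c_i}$ supplied by the failure of (ii) (which simultaneously prevents either extracted vector from vanishing), combined with the failure of (iii) that caps $|J_\alpha|$ at $2$.
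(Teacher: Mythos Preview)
Your proof is correct and follows essentially the same approach as the paper's: reduce to $e_n\in M$, push in $b$ and $c^*$ (as vectors in the first $n-1$ coordinates), iterate with the diagonal block $A_0$, and conclude that the resulting family spans $\C^{n-1}$. The only difference is that the paper compresses your last two paragraphs into the single assertion that the negation of (i)$\vee$(ii)$\vee$(iii) is equivalent to $\{A_0^m b,\,A_0^m c^*:m\ge 0\}$ having full rank $n-1$, whereas you unpack this via Lagrange interpolation on the distinct diagonal values and a case split on $|J_\alpha|\le 2$.
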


\begin{proof}
Let $L$ be a subspace invariant both under $A$ given by \eqref{ah} and $A^*$. If $L$ contains $e_j$ for some $j<n$, then also $b_je_n\in L$ and $\overline{c_j}e_n\in L$. Since the negation of (i) from  Proposition~\ref{p:redsuf} holds, we conclude that $e_n\in L$. Thus, this case is reduced to $e_n\in L$.

Using notation \eqref{bah}, observe that from $e_n\in L$ it follows that $Ae_n-\mu e_n=\begin{bmatrix}b\\ 0\end{bmatrix}$
and $A^*e_n-\overline{\mu}e_n=\begin{bmatrix}c^*\\ 0\end{bmatrix}$ also lie in $L$. Moreover, along with every $x=\begin{bmatrix}x'\\x_n\end{bmatrix}\in L$ we also have $\begin{bmatrix}A_0  x'\\ 0\end{bmatrix}\in L$.
Using this property repeatedly, we see that
\[ \begin{bmatrix}A_0^m b\\ 0\end{bmatrix}, \begin{bmatrix}A_0^m c^* \\ 0\end{bmatrix}\in L \text{ for all } m=0,1,\ldots \]
The negation of {${\rm (i)}\vee{\rm (ii)}\vee{\rm (iii)}$} means exactly that the system of vectors $\{ A_0^m b, A_0^m c^*\colon m=0,1,\ldots \}$ has the full rank $n-1$, thus implying that $L$ contains all
the vectors $e_j$ and not just $e_n$.
In other words, $L$ is the whole space.
\end{proof}

We note that if in the setting of Proposition~\ref{p:redsuf} case (iv) all the lines $\ell_j$ coincide,  condition \eqref{inter} then implies that $\arg(\lambda-a_n)$ is the same as all $\arg(\lambda-a_j)$, $j=1,\ldots,n-1$. According to \cite[Proposition 2]{JSST}, this is exactly the situation when $A$ is normal and, moreover, {\em essentially Hermitian}; the latter means that $A$ is a linear combination of a Hermitian matrix and the identity.

Another way to look at essentially Hermitian matrices is as follows: given $A\in\C^{n\times n}$, consider the minimal number of distinct eigenvalues attained by the Hermitian (equivalently: skew-Hermitian) part of the ``rotated'' matrix $e^{i\theta}A$. A matrix is essentially Hermitian if and only if this number is the minimal possible, i.e. equals one.

From this point of view it is very natural to introduce the class of matrices for which this number equals two. Let us call such matrices {\em dichotomous}.
We want to describe all dichotomous arrowhead matrices. Let us start with the simplest case when the matrix itself is Hermitian, with 0 and 1 as its eigenvalues. In other words, $A=P$ is a self-adjoint involution
(in operator terms, an orthogonal projection).

\begin{lem}\label{l:op}
An arrowhead matrix $P$ is such that $P=P^*=P^2$ if and only if it is either {\rm (i)} diagonal, with all the diagonal entries equal 0 or 1, or {\rm (ii)} has the form
\eq{opah}
P= \begin{bmatrix}p_1 & 0 & \ldots  & \ldots & \ldots & \ldots  & \ldots & 0 \\
0 & \ddots &  &  &  & &  & \vdots \\
\vdots & & p_{i-1} & & & & & 0 \\
\vdots & & & t & & & & \alpha \\
\vdots & & & & p_{i+1} & & & 0 \\
\vdots & & & & & \ddots & & \vdots \\
\vdots & & & & & & p_{n-1} & 0 \\
0 &\dots  & 0 & \overline{\alpha} & 0 & \ldots & 0 & 1-t\end{bmatrix}.
\en
Here $t\in (0,1)$, $\abs{\alpha}=\sqrt{t(1-t)}$, $\alpha$ and $\overline{\alpha}$ are the $i$th elements of the last column and row, respectively, and $p_j\in\{0, 1\}$ for all $j=1,\ldots n-1$ with $j\neq i$.
\end{lem}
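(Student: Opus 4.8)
The plan is to work directly from the defining equations $P = P^* = P^2$, treating $P$ as an arrowhead matrix in the block form \eqref{bah} with $A_0 = \diag[p_1,\dots,p_{n-1}]$, $\mu = p_n$, column $b$, and row $c$. Hermiticity $P = P^*$ immediately forces $p_1,\dots,p_n \in \R$ and $c = b^*$, so it suffices to analyze the single condition $P^2 = P$. Writing this out in the block decomposition gives three scalar/vector equations: $A_0^2 + b b^* = A_0$ (upper-left block), $A_0 b + \mu b = b$ (upper-right block, equivalently $(A_0 + \mu I)b = b$), and $b^* b + \mu^2 = \mu$ (lower-right scalar). The first block equation, read entrywise on the diagonal, says $p_j^2 + |b_j|^2 = p_j$ for each $j = 1,\dots,n-1$, i.e. $|b_j|^2 = p_j(1-p_j)$; in particular each $p_j \in [0,1]$.

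Next I would do the case split on how many of the $b_j$ are nonzero. If $b = 0$, then the upper-left equation reduces to $A_0^2 = A_0$, forcing each $p_j \in \{0,1\}$, while the lower-right scalar gives $\mu^2 = \mu$, so $\mu \in \{0,1\}$ as well; this is exactly case (i), the diagonal projection. If some $b_i \neq 0$, set $t := p_i$; from $|b_i|^2 = t(1-t) \neq 0$ we get $t \in (0,1)$ and $|b_i| = \sqrt{t(1-t)}$, which already matches the claimed form of $\alpha$. The key remaining point is to show that $b_i$ is the \emph{only} nonzero off-diagonal entry. For any other index $j \neq i$ with $b_j \neq 0$, the upper-right equation $(p_j + \mu)b_j = b_j$ forces $p_j + \mu = 1$; but the same equation applied to index $i$ forces $p_i + \mu = 1$, i.e. $\mu = 1 - t$; combining, $p_j = 1 - \mu = t \in (0,1)$, so $p_j$ is also a non-integer diagonal entry equal to $t$. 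Then the off-diagonal ($k\neq \ell$) entries of $A_0^2 + bb^* = A_0$ read $b_k \overline{b_\ell} = 0$ for $k \neq \ell$, which for $k=i$, $\ell=j$ contradicts $b_i, b_j \neq 0$. Hence at most one $b_j$ is nonzero, and it must be $b_i$.

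With only $b_i \neq 0$: for every $j \neq i$ the diagonal relation gives $|b_j|^2 = 0$ so $b_j = 0$ and then $p_j^2 = p_j$, i.e. $p_j \in \{0,1\}$ as required; the upper-right equation at index $i$ gives $\mu = 1 - t$; and one should double-check consistency with the lower-right scalar $|b_i|^2 + \mu^2 = \mu$, which becomes $t(1-t) + (1-t)^2 = (1-t)$, an identity. Finally, one sets $\alpha := b_i$ (so $\overline\alpha = c_i$ by $c = b^*$), giving precisely form \eqref{opah}. Conversely, any matrix of form (i) or (ii) is readily checked to satisfy $P = P^* = P^2$ by reversing these computations — in case (ii) the only nontrivial verifications are the $2\times 2$ block in rows/columns $i$ and $n$, namely $\begin{bmatrix} t & \alpha \\ \overline\alpha & 1-t\end{bmatrix}$ is idempotent because $|\alpha|^2 = t(1-t)$ and its trace is $1$, a determinant-trace check. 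I do not anticipate a serious obstacle here; the only mildly delicate step is the ``uniqueness of the nonzero off-diagonal entry'' argument, which hinges on comparing the upper-right block equation across indices together with the vanishing of off-diagonal entries of $A_0^2 + bb^*$, but that is a short computation rather than a genuine difficulty.
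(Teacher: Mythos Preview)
Your proof is correct and follows essentially the same route as the paper: both hinge on the observation that the upper-left block equation $A_0^2 + bb^* = A_0$ (the paper writes it as $\omega\omega^* = P_0 - P_0^2$) has diagonal right-hand side, forcing $b_k\overline{b_\ell}=0$ for $k\neq\ell$ and hence at most one nonzero off-diagonal entry. Your detour through the upper-right equation $(p_j+\mu)b_j=b_j$ before invoking that off-diagonal constraint is redundant---the vanishing $b_i\overline{b_j}=0$ alone already gives uniqueness---but it does no harm.
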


\begin{proof}
{\sl Sufficiency} can be checked by a simple direct computation. To prove {\sl necessity}, observe first of all that if an arrowhead matrix $P$ is self-adjoint, then its diagonal entries are real while the block representation takes the form
	\[ P=\begin{bmatrix} P_0  & \omega \\ \omega^*& \mu \end{bmatrix}. \]
Condition $P^2=P$ implies in particular that \eq{ome} \omega\omega^*=P_0 -P_0^2. \en
Since the right hand side of \eqref{ome} is diagonal along with $P_0 $, the entries of $\omega$ are such that $\omega_j\overline{\omega_k}=0$ whenever $j\neq k$. Thus, at most one of them can be different from zero. In the case that $\omega =0$, the matrix $P$ is diagonal, and so (i) holds.

Suppose therefore that for some $i$ we have $\omega_i:=\alpha\neq 0$. Then, up to a permutational similarity, $P$ is the direct sum of the diagonal
matrix $\diag[p_1,\ldots,p_{i-1},p_{i+1},\ldots,p_{n-1}]$ and the $2\times2$ block $P'=\begin{bmatrix}p_i & \alpha \\ \overline{\alpha} & \mu\end{bmatrix}$. Since $P'$ is an orthogonal projection with a non-zero off diagonal entry, $P'$ has rank one and is unitarily similar to $\diag[0,1]$. Denoting $p_i:=t$, we thus see that $\mu=1-t$ and $\abs{\alpha}^2=p_i\mu=t(1-t)$, as described in case (ii).
\end{proof}

The description of all dichotomous arrowhead matrices follows.

\begin{prop}\label{p:dich}
The arrowhead matrix \eqref{ah} is dichotomous if and only if for some angle $\theta$ and distinct real $h_0, h_1$ either	
\begin{enumerate}
\item[(a)]
\begin{equation}\label{offdiag}
\arg{b_j}+\arg{c_j}=:2\theta+\pi \text{ does not depend on } j,\quad 	\abs{b_j}=\abs{c_j}
\end{equation}
\noindent for $j=1,\ldots, n-1$, and
\begin{equation}\label{diag}
	\re(e^{-i\theta}a_j) \text{ equals } h_0 \text{ or } h_1
\end{equation}
for $j=1,\ldots,n$, with both values attained,\end{enumerate}
or \begin{enumerate}
\item[(b)] {Conditions \eqref{diag} and \eqref{offdiag} hold for $j=1,\ldots,n-1$, except for one missing $j=i$, and}
\begin{equation}\label{coni}
	\re(e^{-i\theta}a_i)=(1-t)h_0+th_1, \quad
	\re(e^{-i\theta}a_n)=th_0+(1-t)h_1,
\end{equation}
\begin{equation*}
	\abs{e^{-i\theta}b_i+e^{i\theta}\overline{c_i}}=2\abs{h_1-h_0}\sqrt{t(1-t)}
\end{equation*} for some $t\in (0,1)$.
\end{enumerate}
\end{prop}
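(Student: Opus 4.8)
The plan is to reduce the dichotomy condition to a statement about the arrowhead matrix $H_\theta := \re(e^{-i\theta}A)$, which is itself an arrowhead matrix (Hermitian, with real diagonal entries $\re(e^{-i\theta}a_j)$ and off-diagonal entries in row/column $n$ given by $\tfrac12(e^{-i\theta}b_j + e^{i\theta}\overline{c_j})$). By definition $A$ is dichotomous iff for some $\theta$ the Hermitian matrix $H_\theta$ has exactly two distinct eigenvalues. So the whole problem is: \emph{classify arrowhead Hermitian matrices with exactly two eigenvalues $h_0 \neq h_1$.} Writing $H_\theta = h_0 I + (h_1-h_0) P$, this is exactly asking that $P := (H_\theta - h_0 I)/(h_1-h_0)$ be an orthogonal projection — and $P$ is again an arrowhead matrix. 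So I would invoke Lemma~\ref{l:op} directly.

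First I would make the reduction precise: $A$ dichotomous $\iff$ there exist $\theta$ and distinct reals $h_0,h_1$ such that $(\re(e^{-i\theta}A) - h_0 I)/(h_1 - h_0)$ is a self-adjoint involution. Note $\re(e^{-i\theta}A)$ has diagonal $\re(e^{-i\theta}a_j)$ for $j=1,\dots,n$ and its $j$-th last-column entry is $g_j := \tfrac12(e^{-i\theta}b_j + e^{i\theta}\overline{c_j})$ (with conjugate in the last row), since the $(j,n)$ entry of $e^{-i\theta}A$ is $e^{-i\theta}b_j$ and the $(n,j)$ entry is $e^{-i\theta}c_j$, whose contributions to the Hermitian part are $\tfrac12 e^{-i\theta}b_j$ and $\tfrac12\overline{e^{-i\theta}c_j} = \tfrac12 e^{i\theta}\overline{c_j}$. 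Then apply Lemma~\ref{l:op} to the projection $P$, translating its two alternatives:

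\begin{itemize}
\item Case (i) of Lemma~\ref{l:op} ($P$ diagonal) forces all $g_j = 0$, i.e. $e^{-i\theta}b_j = -e^{i\theta}\overline{c_j}$, which says $|b_j| = |c_j|$ and $\arg b_j + \arg c_j = 2\theta + \pi$ — this is exactly \eqref{offdiag} — while $p_j \in \{0,1\}$ becomes $\re(e^{-i\theta}a_j) \in \{h_0, h_1\}$ with both values attained (the projection being neither $0$ nor $I$, as $A$ is genuinely dichotomous and not essentially Hermitian — or allowing the degenerate attainment makes the ``two distinct eigenvalues'' condition precise). This is alternative (a).
\item Case (ii) of Lemma~\ref{l:op} (the $2\times 2$ block at position $i$) forces $g_j = 0$ for all $j \neq i$ — giving \eqref{offdiag}, \eqref{diag} for all $j \neq i,n$ — while the constraints on the $2\times 2$ block $\begin{bmatrix} t & \alpha \\ \overline{\alpha} & 1-t\end{bmatrix}$ of $P$ translate, after scaling back by $h_1 - h_0$ and shifting by $h_0$, into: the diagonal entries $\re(e^{-i\theta}a_i) = h_0 + (h_1-h_0)t$ and $\re(e^{-i\theta}a_n) = h_0 + (h_1-h_0)(1-t)$ — i.e. \eqref{coni}, using $th_1 + (1-t)h_0$ etc. — and $|g_i| = |h_1 - h_0|\,|\alpha| = |h_1-h_0|\sqrt{t(1-t)}$, which is $\tfrac12|e^{-i\theta}b_i + e^{i\theta}\overline{c_i}| = |h_1-h_0|\sqrt{t(1-t)}$, i.e. the last displayed equation of (b). This is alternative (b).
\end{itemize}

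The main obstacle I anticipate is bookkeeping rather than conceptual: verifying that the $\theta$ appearing in the arg-condition \eqref{offdiag} is forced to be the \emph{same} $\theta$ as the rotation angle — i.e. that once $H_\theta$ has two eigenvalues, the off-diagonal phase condition pins down $\theta \bmod \pi$, and conversely that the $\pi$-ambiguity in $\theta$ (which swaps $h_0 \leftrightarrow h_1$ or negates everything) is harmless and consistent with the symmetric roles of $h_0, h_1$ in the statement. A second point needing care is the degenerate sub-cases of Lemma~\ref{l:op}(ii) — when $t \to 0$ or $1$, or when the $2\times 2$ block's eigenvalues coincide with some $p_j$ — to confirm these are already subsumed in (a) or excluded by the requirement $t \in (0,1)$ and $h_0 \neq h_1$; and, if one wants $A$ to be additionally \emph{not} essentially Hermitian, checking against the remark following Lemma~\ref{l:red} that case (iv)'s all-lines-coincide scenario is what gets excluded. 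Finally I would note that sufficiency in each case is the same direct computation as in Lemma~\ref{l:op}, run backwards.
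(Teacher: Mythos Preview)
Your proposal is correct and follows essentially the same approach as the paper's proof: reduce dichotomy to $\re(e^{-i\theta}A)=h_0I+(h_1-h_0)P$ with $P$ an arrowhead orthogonal projection, then invoke Lemma~\ref{l:op}. The paper's own proof is in fact briefer than yours---it simply states this reduction and says ``Invoking Lemma~\ref{l:op} completes the proof,'' leaving the entrywise translation (which you carry out explicitly) to the reader; the bookkeeping concerns you flag are real but routine, and the paper does not spell them out either.
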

By convention, the argument condition in \eqref{offdiag} is considered satisfied whenever $b_j=c_j=0$. Condition \eqref{diag} means that the respective elements $a_j$ lie on two parallel lines
forming the angle $\theta$ with the positive direction of the $y$-axis. The slope of these lines is defined uniquely by \eqref{offdiag} if there is at least one non-zero pair $b_j,c_j$.

\begin{proof}
Any matrix $A$ is dichotomous if and only if for some $\theta$ the self-adjoint matrix $\re(e^{-i\theta}A)$ has exactly two distinct eigenvalues, say $h_0$ and $h_1$.
This is equivalent to
	\eq{AP} \re(e^{-i\theta}A)=h_0 I +(h_1-h_0)P,\en
where $P\neq 0,I$ is a self-adjoint idempotent. If this $A$ is an arrowhead matrix, then so is $P$. Invoking Lemma~\ref{l:op} completes the proof.
\end{proof}

Note that case (a) corresponds to diagonal $P$, while (b) stems from $P$ given by \eqref{opah}. In the latter case $\arg\alpha$ is defined uniquely as $\arg(e^{-i\theta}b_i+e^{i\theta}\overline{c_i})$.

For our purposes, the case of unitarily irreducible dichotomous matrices is of special interest.

\begin{thm}\label{th:diur}
An arrowhead matrix \eqref{ah} is unitarily irreducible and dichotomous if and only if it is as described by Proposition~\ref{p:dich} with
\eq{abc} a_1,\ldots,a_{n-1} \text{ pairwise distinct and } b_j,c_j\neq 0 \text{ whenever \eqref{offdiag} holds},\en   except when $A$ is of type {\rm (b)} with
\eq{K} \im (e^{-i\theta}A):=K=\begin{bmatrix}k_1 & 0 & \ldots & 0 & m_1 \\
	0 & k_2 & \ldots & 0 & m_2 \\
	\vdots & \vdots & \ddots & \vdots & \vdots \\
	0 & 0 & \ldots & k_{n-1} & m_{n-1} \\
	\overline{m_1} & \overline{m_2} & \ldots & \overline{m_{n-1}} & k_n \end{bmatrix} \en
such that $m_i/\alpha\in\R$ (where $i$ is determined by the position of the entry $t$ in \eqref{opah}) and either \\
{\rm (b')} all $a_j$ ($j\neq i,n$) lie on the same line (equivalently: all of the respective $p_j$ in \eqref{opah} take the same value $p_0\in\{0,1\}$), and
\begin{equation*}
k_{n}=\lambda-\sum_{j\neq i,n}\frac{\abs{m_j}^2}{\lambda-k_j}\text{ with } \lambda=k_i{+(p_0-t)}\frac{m_i}{\alpha},
\end{equation*}
or \\ {\rm (b'')}\ $n=4$, $p_{\sigma(1)}=0$, $p_{\sigma(2)}=1$ for some permutation $\sigma$ of $\{1,2,3\}$ satisfying $\sigma(3)=i$,
while
\eq{k123} k_i=(1-t)k_{\sigma(1)}+tk_{\sigma(2)}, \en
{\eq{m0}(1-t)m_{\sigma(1)}^2=tm_{\sigma(2)}^2\text{ if } m_i+(k_{\sigma(1)}-k_{\sigma(2)})\alpha=0,\en}
and
\eq{det} \det\begin{bmatrix} k_{\sigma(1)}-k_i+t{m_i}/{\alpha} & 0 & m_{\sigma(1)}\\
0 & k_{\sigma(2)}-k_i-(1-t){m_i}/{\alpha} & m_{\sigma(2)} \\
\overline{m_{\sigma(1)}} & \overline{m_{\sigma(2)}} & k_4-k_i-(1-2t){m_i}/{\alpha}\end{bmatrix}=0.\en
\end{thm}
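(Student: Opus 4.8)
The plan is to work with the rotated matrix: since unitary (ir)reducibility is unaffected by multiplication by $e^{-i\theta}$, assume $\theta=0$, so that $\re A=h_0I+(h_1-h_0)P$ with $P$ the self-adjoint idempotent arrowhead matrix of Lemma~\ref{l:op} and $\im A=K$ an arrowhead matrix \eqref{K}. A subspace reduces $A$ exactly when it reduces both $P$ and $K$, so $A$ is unitarily reducible iff some orthogonal projection $Q\neq 0,I$ commutes with $P$ and with $K$. I would first check that \eqref{abc} is necessary: if it fails, either some $b_j=c_j=0$ while \eqref{offdiag} holds at $j$, which is case (i) of Proposition~\ref{p:redsuf}, or two of $a_1,\dots,a_{n-1}$ coincide --- and in case (b) this can only be two indices $\neq i$, since $\re(e^{-i\theta}a_i)$ lies strictly between $h_0$ and $h_1$ --- in which case \eqref{offdiag} gives $b_j=-e^{2i\theta}\overline{c_j}$ and $b_k=-e^{2i\theta}\overline{c_k}$, hence $b_j\overline{c_k}=b_k\overline{c_j}$, which is case (ii); either way $A$ has a normal eigenvalue and is reducible. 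Conversely, assuming \eqref{abc}, conditions (i)--(iii) of Proposition~\ref{p:redsuf} all fail (the first because \eqref{offdiag} holds at every relevant $j$ and Proposition~\ref{p:dich}(b) forbids $b_i=c_i=0$, the others by distinctness of the $a_j$), so $A$ has no reducing eigenvector orthogonal to $e_n$ and Lemma~\ref{l:red} applies: any reducing subspace of $A$ and its orthogonal complement contain none of $e_1,\dots,e_n$. One also records that $m_j=ie^{i\theta}\overline{c_j}\neq0$ wherever \eqref{offdiag} holds.

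Case (a) is then quick: $P$ is diagonal, so a commuting $Q$ is block diagonal, $Q=Q_0\oplus Q_1$, relative to the coordinate eigenspaces $E_0,E_1$; the one not containing $e_n$, say $E_\nu$, is nonzero and spanned by $e_j$'s with $j\le n-1$, on which $K$ restricts to $\diag[k_j]$ with pairwise distinct entries (equal real parts, unequal $a_j$). Hence $Q_\nu$ is diagonal, some $e_j$ is its eigenvector, and $e_j$ ends up in the reducing subspace or its complement --- contradicting Lemma~\ref{l:red}. So in case (a) the matrix is always unitarily irreducible, matching the statement (no exceptions there).

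For case (b) I would permute coordinates so the off-diagonal entry of $P$ in \eqref{opah} sits at $(i,n)=(n-1,n)$, split $\C^n=E_0\oplus E_1$ into the eigenspaces of $P$, and note $E_\nu=W_\nu\oplus\C g_\nu$ with $W_\nu=\Span\{e_j:j\le n-2,\ p_j=\nu\}$ and $g_0,g_1$ unit eigenvectors of the $2\times2$ block (scalar multiples of $-\alpha e_i+te_n$ and $te_i+\overline\alpha e_n$). Rewriting $K$ in the orthonormal basis adapted to this splitting, each diagonal block $K_{\nu\nu}$ is again arrowhead with arrow vertex $g_\nu$, diagonals $k_j$ ($j\in W_\nu$) and $\scal{Kg_\nu,g_\nu}$, and nonzero couplings; the $k_j$ being distinct, $K_{\nu\nu}$ has simple spectrum whenever $W_\nu\neq\emptyset$, with $g_\nu$ cyclic for it. The off-diagonal block $K_{10}$ sends $W_0$ into $\C g_1$ and $g_0$ into $(\sqrt t\sum_{j\in W_1}m_je_j)+\kappa_{10}g_1$, so it has rank $\le1$ if $W_1=\emptyset$ and rank $2$ if both $W_0,W_1\neq\emptyset$. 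A commuting $Q=Q_0\oplus Q_1$ then forces each $Q_\nu$ to be a spectral projection of $K_{\nu\nu}$ and $Q_1K_{10}=K_{10}Q_0$. This splits into two subcases. If some $W_\nu$ is empty --- i.e.\ all $p_j$ with $j\neq i,n$ equal one value $p_0$, equivalently the $a_j$ ($j\neq i,n$) are collinear --- then $E_\nu=\C g_\nu$ is one-dimensional, and (after ruling out that $g_\nu$ itself is a $K$-eigenvector, which would force some $m_j=0$) reducibility amounts to $K$ having an eigenvector in the $(n-1)$-dimensional $E_{1-\nu}$; such a vector is also a $P$-eigenvector, hence a reducing eigenvector of $A$, and expressing it via Lemma~\ref{l:eig} and matching its $(e_i,e_n)$-part to $g_{1-\nu}$ gives $\lambda=k_i+(p_0-t)m_i/\alpha$ (real iff $m_i/\alpha\in\R$) subject to the remaining instance of \eqref{inter} --- this is exactly condition (b'). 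If instead $W_0,W_1\neq\emptyset$, then $K_{10}$ has rank $2$; requiring $Q_1$ to commute in addition with $K_{10}K_{10}^*$ (and $Q_0$ with $K_{10}^*K_{10}$) and using simplicity of the spectra together with distinctness of the $k_j$, a dimension count shows a non-scalar $Q$ can exist only when $\dim W_0=\dim W_1=1$, i.e.\ $n=4$; then $Q_0,Q_1$ are rank one, and unwinding $Q_1K_{10}=K_{10}Q_0$ together with the Lemma~\ref{l:red} requirement that no coordinate axis lie in $\mathrm{range}\,Q$ or $\ker Q$ yields precisely \eqref{k123}, \eqref{m0}, \eqref{det} and $m_i/\alpha\in\R$ --- condition (b''). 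Finally I would verify the converse: in each exceptional configuration the required $Q$ (or reducing eigenvector) is written down explicitly, so $A$ is reducible there, while everywhere else under \eqref{abc} the preceding arguments rule out a non-scalar commuting projection.

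The hard part will be the case-(b) computation: carrying out the change of basis for $K$ and tracking the arrowhead structure of the blocks $K_{\nu\nu}$ and the exact rank of $K_{10}$, then --- the real crux --- the dimension count forcing $n=4$ in the non-collinear subcase, and the algebra turning the intertwining relations $Q_1K_{10}=K_{10}Q_0$ into \eqref{k123}, \eqref{m0} and \eqref{det}. Degenerate configurations (such as $m_i=0$, or $\lambda$ coinciding with some $k_j$) will need separate but routine handling and should not change the outcome.
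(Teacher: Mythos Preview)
Your overall strategy is sound and is a genuinely different organization from the paper's. You work systematically in the $P$-eigenspace decomposition $\C^n=E_0\oplus E_1$ and analyze commuting projections $Q=Q_0\oplus Q_1$, whereas the paper stays in the standard basis, uses that (after a perturbation $K\mapsto K+\epsilon P$) every eigenvector of $K$ has all coordinates nonzero, and pushes explicit vectors around. In case~(a) your argument is actually cleaner than the paper's Vandermonde computation: you simply note that $K$ compresses to $\diag[k_j]$ on the $P$-eigenspace not containing~$e_n$, forcing $Q_\nu$ to be diagonal and hence some $e_j$ into $L$ or $L^\perp$. In case~(b') your reduction to ``$K$ has an eigenvector in the big $P$-eigenspace'' is equivalent to what the paper does; the paper supplements this with a direct check that no higher-dimensional reducing subspace exists when that eigenvector is absent, which in your language follows because any nontrivial $K$-invariant subspace of $E_{p_0}$ already contains a $K$-eigenvector.

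The one place where your outline is thinner than the paper is the ``dimension count'' in~(b$''$). You assert that rank$\,K_{10}=2$ together with $Q_\nu$ commuting with $K_{\nu\nu}$ and with $K_{10}K_{10}^*$ (resp.\ $K_{10}^*K_{10}$) forces $n=4$, but this is not automatic: even when $K_{\nu\nu}$ has simple spectrum, nothing a priori prevents one of its eigenvectors from lying in an eigenspace of the rank-two operator $K_{10}K_{10}^*$. The paper avoids this by a different and more explicit maneuver: it shows that any reducing subspace $L$ with $\dim L\geq 3$ must contain a nonzero vector $z$ with $z_{n-1}=z_n=0$, then iterates $P$ and $K$ on $z$ to produce either the vector $[0,\dots,0,\alpha,1-t]^T$ in $L$ (whence $\dim L^\perp\leq 1$, impossible since there are no reducing eigenvectors) or, via a Vandermonde argument, some $e_j\in L$ (whence $L=\C^n$ by Lemma~\ref{l:red}). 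That argument is short and uses only the all-nonzero-coordinates property of $K$-eigenvectors, which is where the $\epsilon$-perturbation enters. Your route should also close, but you will need either an analogous perturbation (to guarantee, e.g., that the vertex entry $\scal{Kg_\nu,g_\nu}$ of the arrowhead block $K_{\nu\nu}$ differs from all the $k_j$, so that $K_{\nu\nu}$ genuinely has simple spectrum with $g_\nu$ cyclic) or a more careful structural argument replacing the dimension count; as written that step is a promissory note rather than a proof.
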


\begin{proof}
An arrowhead dichotomous matrix $A$ meets the description provided by Proposition~\ref{p:dich}. Let $S$ denote the set of indices $j$ for which \eqref{offdiag} holds.
In other words, $S=\{1,\ldots,n-1\}$ in case (a) and  $S=\{1,\ldots,n-1\}\setminus\{i\}$ in case (b).		
		
If one of $b_j,c_j$ equals zero for some $j\in S$, then the other one is also zero, due to the first condition in \eqref{offdiag}. Proposition \ref{p:redsuf}(i) would then imply that $A$ has a normal eigenvalue. Moreover, \eqref{offdiag} also implies that
$b_j\overline{c_k}=b_k\overline{c_j}$ for $j,k\in S$. So, if $a_j=a_k$, then $A$ would have a normal eigenvalue due to  Proposition \ref{p:redsuf}(ii).

We have established that, if $A$ is unitarily irreducible, then indeed $b_j,c_j\neq 0$ and $a_j$ are all distinct for $j\in S$. Comparing \eqref{diag} with \eqref{coni}, we see that in case (b) also $a_i,a_n$ are different from $a_j$ for $j\in S$. Therefore, all $a_1,\ldots,a_{n-1}$ are pairwise distinct.

In what follows, we {therefore} suppose that \eqref{abc} holds (this implies that \eqref{offdiag} holds for all $j\in S$). {Recall that, by \eqref{opah} and \eqref{AP},  $\re(e^{-i\theta}A)$ has zero off-diagonal entries in the $j$th row and column for all $j\in S$.}  This implies in particular that $m_j\neq 0$ for $j\in S$.
Also, when $\re(e^{-i\theta}a_j)=\re(e^{-i\theta}a_\ell)$, we have $k_j\neq k_\ell$.

Observe also that instead of treating unitary (ir)reducibility of $A = e^{i\theta}\left((h_0I + (h_1-h_0)P+iK\right)$, for $P$ defined in Lemma \ref{l:op}, we may consider the question of whether $K$ and $P$ have a common invariant subspace.
Replacing $K$ by $K+\epsilon P$ with a sufficiently small real $\epsilon\neq 0$ if needed, we may suppose that all $m_j$, including $m_i$ in case (b), are different from zero, while all $k_j$ ( $j=1,\ldots,n-1$) are mutually distinct.
Then the eigenvalues of $K$ are all simple and different from $k_1,\ldots,k_{n-1}$. Applying Lemma~\ref{l:eig} to $K$ in place of $A$, we rewrite \eqref{inter} and \eqref{eig} as
\eq{xn'} \sum_{j=1}^{n-1}\frac{\abs{m_j}^2}{\lambda-k_j}+k_n-\lambda=0, \en
and
\eq{xjn'} x_j=\frac{m_j}{\lambda-k_j}\neq 0, \quad j=1,\ldots, n-1;\ x_n=1, \en
respectively. Due to \eqref{xjn'} the eigenvectors of $K$ (and thus the reducing eigenvectors of $A$, if any) have all their coordinates different from zero. In other words, they are not orthogonal to any $e_j$.
This makes Lemma~\ref{l:red} applicable.

We now treat several possible situations separately.

{\bf I.} {\sl $P$ is diagonal.}

Applying a permutational similarity (not involving the last row and column so that the arrowhead structure of $K$ is preserved) we may without loss of generality suppose that $P$ is either $0_s\oplus I_{n-s}$ or $I_s\oplus 0_{n-s}$ for some $s$ and $n$ with $1\leq s<n$.

If $L$ is a common invariant subspace of $K$ and $P$, it is also invariant under the action of  \[ D= \diag[k_1,\ldots,k_s,0,\ldots,0].\] Choose $x\in L$ which is an eigenvector of $K$. All its coordinates are different from zero, as observed earlier. Applying $D$ to $x$ repeatedly, we see that, along with $x$, the subspace $L$
also contains all the vectors $[k_1^jx_1,\ldots,k_s^jx_s,0,\ldots,0]^T$, $j=1,2,\ldots$.  Since $k_1,\ldots,k_s$ are mutually distinct, the Vandermonde determinant gives us that  the first $s$ such vectors are linearly independent. Hence the span of these vectors has dimension $s$ and thus contains $e_1,\ldots,e_s$.  Consequently, $e_1,\ldots,e_s\in L$, and by Lemma~\ref{l:red}, $L$ is the whole space. We have proved that $A$ is unitarily irreducible, which agrees with the statement of the theorem, and thus concludes case {\bf I}.\\

{\bf II.}  {\sl $P$ is of the form \eqref{opah}.}

To simplify the notation, we use the transposition similarity switching the rows and columns numbered $i$ and $n-1$. In other words,
without loss of generality
\eq{H1} P=\diag[p_1,\ldots,p_{n-2}]\oplus\begin{bmatrix} t & \alpha \\ \overline{\alpha} & 1-t\end{bmatrix}, \en
each $p_j$ being equal 0 or 1,  while $K$ is still given by \eqref{K}.

{\bf II.b'.} {\sl All $p_j$ in \eqref{H1} are the same.}

Passing from $P$ to $I-P$ allows us to switch between the cases
$p_1=\cdots=p_{n-2}=0$ and $p_1=\cdots=p_{n-2}=1$, while leading to the change $t\mapsto 1-t$, $\alpha\mapsto -\alpha$
in the second direct summand of \eqref{H1}. Since condition (b') from the statement of Theorem \ref{th:diur} is invariant under the latter, we may suppose that
\[ P=0_{n-2}\oplus \begin{bmatrix} t & \alpha \\ \overline{\alpha} & 1-t\end{bmatrix}.\]

Let us determine when $A$ has a reducing eigenvector, i.e., a common eigenvector of $P$ and $K$.
Such an eigenvector cannot correspond to the (simple) eigenvalue $1$ of $P$, since it is collinear with $y=[0,\ldots,0,\alpha,1-t]^T$ and thus orthogonal to $e_1$.

The zero eigenvalue of $P$, however, has multiplicity $n-1$, and leaves more room to maneuver. The respective eigenvectors either have two last entries equal to zero, in which case they are not eigenvectors of $K$, or can be normalized by setting their last entry equal to $1$ and then written as
\eq{eH} [x_1,\ldots,x_{n-2},-\alpha/t,1]^T \en  with $x_1,\ldots,x_{n-2}$ arbitrary.

We now switch our focus to $K$.  From \eqref{xjn'} and \eqref{xn'}, considering the convention $i=n-1$, we see that there exists an eigenvector of $K$ of the form \eqref{eH} if and only if {(b')} holds (with $\lambda$ being the respective eigenvalue).

We have addressed the situation of common eigenvectors of $K$ and $P$. It remains to show that, when (b') does not hold, $K$ and $P$ do not have any non-trivial common invariant subspaces. To this end, a non-trivial common invariant subspace $L$, or its orthogonal complement $L^\perp$, must contain an eigenvector $y$ corresponding to the simple eigenvalue (namely, $1$) of $P$. Without loss of generality, let $y\in L$. In addition to $y$ and the vector
\[ Ky=[m_1(1-t),\ldots,k_{n-1}\alpha +m_{n-1}(1-t),*]^T,\]  $L$ also contains their linear combination $y_1=[m_1,\ldots,m_{n-2},*,0]^T$.

In turn, there is a linear combination of $Ky_1$ and $y$ of the form $[k_1m_1,\ldots,k_{n-2}m_{n-2},*,0]^T$, thus also lying in $L$. Repeating sufficiently many times, we see that $L$ contains
vectors of the form \[ [k_1^jm_1,\ldots,k_{n-2}^jm_{n-2},*,0]^T \text{ for all } j=1,2,\ldots. \]
Therefore, $\dim L\geq n-2$, and since $y$ is also in $L$ and has a nonzero last coordinate, we conclude that $\dim L\geq n-1$.  Consequently $\dim L^{\perp}\leq 1$. But $\dim L^{\perp}=1$ is not an option, since otherwise $L^\perp$ would be spanned by a common eigenvector of $P$ and $K$. So $L$ is the whole space. This concludes case {\bf II.b'}.\\

{\bf II.b''}. {\sl Both $0$ and $1$ are actually present on the diagonal of the first block in \eqref{H1}.}

All eigenvectors of $P$ then have at least one zero entry, while according to \eqref{xjn'} the eigenvectors of
$K$ have none. Therefore, $A$ has no reducing eigenvectors, and its reducing subspaces are at least 2-dimensional.
From here it immediately follows that $A$ is unitarily irreducible when $n=3$.

Now let $n\geq 4$ and suppose that there exists a reducing subspace $L$ of $A$ with dimension at least 3. Then it contains a non-zero vector $z$ with the zero coordinates $z_{n-1}, z_n$. At least one of the vectors $Pz$, $(I-P)z$ is also different from zero. Without loss of generality $y:=(I-P)z\neq 0$. A direct computation shows that then $PKy$ is a multiple of the vector $[0,\ldots,0,\alpha,1-t]^T$. So either \eq{vil} [0,\ldots,0,\alpha,1-t]^T\in L,\en  or $PKy=0$.
	
In the latter case $(I-P)Ky=Ky$. Since the last two entries of $(I-P)Ky$ form a vector collinear with $[-\alpha,t]^T$ while the $(n-1)$st coordinate of $Ky$ is zero, this can only happen if
\[ Ky=[k_1y_1,\ldots, k_{n-2}y_{n-2},0,0]^T. \]
	
Since $Ky\in L$ along with $y$, by a repeated application of this reasoning we will either at some step arrive at \eqref{vil} or conclude that
\[  [k_1^my_1,\ldots, k_{n-2}^my_{n-2},0,0]^T\in L, \quad m=1,2,\ldots.  \]
From all $k_j$ being distinct, it then follows that $e_j\in L$ for all $j$ corresponding to non-zero values of $y_j$ and so $L$ is the whole space by Lemma~\ref{l:red}.  	
	
On the other hand, if \eqref{vil} holds, then eigenvectors $x$ of $K$ lying in $L^\perp$ must thus satisfy
\[ \overline{\alpha}x_{n-1}+(1-t)x_n=0. \] Due to \eqref{xjn'}, this condition can hold only for at most one linearly independent eigenvector of $K$. Since $L^\perp$ cannot be one-dimensional, we conclude that it is trivial, and again $L$ is the whole space.

We have thus shown that non-trivial reducing subspaces of $A$ may only be 2-dimensional. If $n>4$, any such subspace will have a complement which is a reducing subspace of dimension at least 3, leading to a contradiction. Therefore, for $n>4$, $A$ must be unitarily irreducible.
{So, unitary reducibility in the II.b'' setting can possibly occur only if $n=4$. Let us concentrate on this case.}

Switching the first and the second row and column of $A$ if necessary, in place of \eqref{H1} we get
\begin{equation*}
P=\diag[0,1]\oplus\begin{bmatrix} t & \alpha \\ \overline{\alpha} & 1-t \end{bmatrix}.
\end{equation*}
We may also apply a diagonal unitary similarity to $A$, allowing us to suppose that $m_j>0$ for all $j$; the argument of $\alpha$ under this similarity will change respectively, turning
condition $m_3/\alpha\in\R$ (from the statement of this theorem) simply into $\alpha\in \R$.
The permutation $\sigma$ in (iv) then becomes redundant, and conditions \eqref{k123}--\eqref{det} simplify respectively as
\eq{k123'} k_3=(1-t)k_1+tk_2, \en
{ \eq{mo'} (1-t)m_1^2=tm_2^2 \ \text{ if }\  m_3+(k_1-k_2)\alpha=0,\en }
 and
\eq{det'} \det\begin{bmatrix} k_1-k_3+t{m_3}/{\alpha} & 0 & m_1\\
0 & k_2-k_3-(1-t){m_3}/{\alpha} & m_2 \\
{m_1} & {m_2} & k_4-k_3-(1-2t){m_3}/{\alpha}\end{bmatrix}=0.\en
Observe also that $P$ has a common invariant subspace with $K$ if and only if it has one with
\begin{equation*}
	K'=K-k_1I+(k_1-k_2)P=\begin{bmatrix} 0 & 0 & 0 & m_1 \\ 0 & 0 & 0 & m_2 \\
0 & 0 & k' & m \\ {m_1} & {m_2} & \overline{m} &  k\end{bmatrix},
	\end{equation*}
where \eq{k} k'=k_3-k_1+(k_1-k_2)t=k_3-k_1(1-t)-k_2t, \en
\eq{m} m= m_3+(k_1-k_2)\alpha, \en and
\eq{k4}
	 k=k_4-k_1+(k_1-k_2)(1-t)=k_4-k_1t-k_2(1-t).
\en

{\sl Necessity.} Suppose $A$ has a 2-dimensional reducing subspace $L$. {We will compute necessary conditions for this subspace $L$ to exist.} Being invariant under $K$, it is spanned by two of its eigenvectors (recall that the eigenvalues of $K$ are distinct). From \eqref{xjn'}
we see that, when the fourth coordinate of these vectors is chosen equal to one, their difference (also lying in $L$) has real non-zero first three entries while the fourth is equal to zero. Scaling, we find
a vector $y=[y_1,y_2,1,0]^T\in L$ with non-zero real $y_1,y_2$.

Being invariant under $P$, the subspace $L$ then also contains the (linearly independent) vectors $Py=[0,y_2,t,\overline{\alpha}]^T$ and $(I-P)y=[y_1,0,1-t,-\overline{\alpha}]^T$, which thus form a basis of $L$. Since \[K'y=[0,0,k',m_1y_1+m_2y_2+{\overline{m}}]^T\in L,\]
the fact that $L$ is 2-dimensional implies that $K'y=0$. In other words, $k'=0$, which from \eqref{k} is equivalent to \eqref{k123'}, and
\eq{m123} {m_1}y_1+{m_2}y_2+\overline{m}=0. \en
Since $m_j$ and $y_j$ are real, \eqref{m123} may hold only if $m\in \R$. By \eqref{m}, this implies that $\alpha\in\R$.

Furthermore, the vector \[ \frac{1}{\alpha}K'Py =\left[m_1,m_2,m, k+({m_2}y_2+{m}t)/\alpha\right]^T \]
also lies in $L$ implying that
\eq{my} (1-t)\frac{m_1}{y_1}+t\frac{m_2}{y_2} = m, \en
\eq{ky} \alpha\left(\frac{m_2}{y_2}-\frac{m_1}{y_1}\right)=k+\frac{m_2y_2+mt}{\alpha}. \en

{If $m=0$,  \eqref{m123} and \eqref{my} imply that $(1-t)m_1^2=tm_2^2$. Due to \eqref{m}, this shows the necessity of \eqref{mo'}.
The necessity of \eqref{det'} is established by the following reasoning.}

Along with $K'$ (or $K$) and $P$, the subspace $L$ is also invariant under \[
K-\frac{m_3}{\alpha}P=\begin{bmatrix}k_1 & 0 & 0 & m_1 \\ 0 & k_2-\frac{m_3}{\alpha} & 0 & m_2 \\ 0 & 0 & k_3-\frac{m_3}{\alpha}t & 0 \\
	m_1 & m_2 & 0 & k_4-\frac{m_3}{\alpha}(1-t)\end{bmatrix}.\]
Observe that $e_3$ is an eigenvector of $K-\frac{m_3}{\alpha}P$ corresponding to the eigenvalue $k_3-\frac{m_2}{\alpha}t$. If this eigenvalue is simple, then $e_3$ must lie either in $L$ or its
orthogonal complement $L^\perp$, which is a contradiction with Lemma~\ref{l:red}. Consequently, the matrix obtained from $K-\frac{m_3}{\alpha}P-(k_3-\frac{m_3}{\alpha}t)I$ by deleting its third row and column must be singular. This is exactly condition \eqref{det'}.

{\sl Sufficiency.} {We will now provide sufficient conditions for $A$ to have a 2-dimensional reducing subspace $L$.}  It is enough to show that if $\alpha\in\R$, $m$ is defined by \eqref{m} {and is then real}, and \eqref{k123'}--\eqref{det'} hold, then there exist $y_1,y_2$ satisfying \eqref{m123}--\eqref{ky}. Indeed, given such $y_1$ and $y_2$, let $y=[y_1,y_2,1,0]^T$. The subspace $L=\Span\{Py,(I-P)y\}$ is 2-dimensional and $P$-invariant by construction. Moreover, $K'y=0$ due to \eqref{k123'} and \eqref{m123}, while ${\frac{1}{\alpha}}K'Py=\frac{m_1}{y_1}(I-P)y+\frac{m_2}{y_2}Py\in L$ due to \eqref{my} and \eqref{ky}.  So, $L$ is $K'$- (and thus $K$-) invariant as well, implying that it is a non-trivial reducing subspace of $A$.

To address the existence of such $y_1$ and $y_2$, first consider the case $m=0$. Condition \eqref{mo'} implies that \eqref{m123} and \eqref{my} hold for any pair  $y_1\neq 0$, $y_2=-m_1y_1/m_2$. Plugging the expression for $y_2$ in terms of $y_1$  into \eqref{ky}  and solving for $y_1$, we can thus {have a more restrictive condition for $y_1$ and $y_2$ that satisfies \eqref{ky} as well}. Since $m=0$, condition \eqref{det'} holds automatically; indeed the first and the second diagonal entries of the matrix in question are both equal to zero due to \eqref{k123'} and \eqref{mo'}.

If $m\neq 0$, condition \eqref{mo'} is {irrelevant}. On the other hand, the system $\{$\eqref{m123},\eqref{my}$\}$ is consistent with no restrictions imposed on $m_j$, and the solutions are real. Therefore, there is a unique value of $k_4$ for which \eqref{k4} and \eqref{ky} also hold, thus making $A$ unitarily reducible. The argument from the necessity proof above implies that this must be the same value of $k_4$ as the one which satisfies \eqref{det'}. When $m\neq 0$, condition \eqref{det'} defines $k_4$ uniquely, and therefore it indeed implies \eqref{ky}. This completes the proof. \end{proof}

\section{Gau--Wu numbers of some arrowhead matrices}\label{s:arrow}

According to \cite[Theorem 7]{CaRaSeS}, a unitarily irreducible $n\times n$ matrix $A$ has Gau--Wu number $k(A)=n$
if and only if it is dichotomous. The results of Section~\ref{s:irred} therefore immediately imply

\begin{thm}\label{th:max}
An arrowhead matrix \eqref{ah} is unitarily irreducible with the Gau--Wu number $k(A)=n$ if and only if it is as described in Proposition~\ref{p:dich}, where, in addition, $a_j$ are distinct and $b_j,c_j$ in \eqref{offdiag} are non-zero{, and we also are not in either of the situations {\em b', b''}} of Theorem~\ref{th:diur}.
\end{thm}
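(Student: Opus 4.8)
The plan is to obtain the statement as an immediate consequence of \cite[Theorem 7]{CaRaSeS} combined with Theorem~\ref{th:diur}, so the ``proof'' is really a matter of splicing together two facts already in hand rather than a fresh argument. First I would recall that by \cite[Theorem 7]{CaRaSeS} a unitarily irreducible $n\times n$ matrix $A$ has $k(A)=n$ if and only if $A$ is dichotomous. Hence, among unitarily irreducible matrices, the conditions ``$k(A)=n$'' and ``dichotomous'' single out the same class, and the matrices described in the statement are exactly the unitarily irreducible dichotomous arrowhead matrices.

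Next I would invoke the classification of that class obtained in Section~\ref{s:irred}: Proposition~\ref{p:dich} says that every dichotomous arrowhead matrix is of type (a) or (b) there, and Theorem~\ref{th:diur} says that such a matrix is in addition unitarily irreducible precisely when \eqref{abc} holds --- that is, $a_1,\ldots,a_{n-1}$ are pairwise distinct and $b_j,c_j\neq 0$ at every index $j$ where \eqref{offdiag} holds --- with the sole exceptions being the type-(b) matrices that fall into subcase (b$'$) or (b$''$). This is verbatim the description in the statement.

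The one point worth a remark (bookkeeping, not an obstacle) is the passage between ``$a_j$ distinct'' in the statement and the formally weaker pairwise-distinctness clause of \eqref{abc}: in case (b) the values $a_i$ and $a_n$ are automatically distinct from all $a_j$ with $j$ satisfying \eqref{offdiag}, as was already derived inside the proof of Theorem~\ref{th:diur} by comparing \eqref{diag} with \eqref{coni}, and $a_n$ carries no further constraint; so the two formulations agree on the relevant class. The other care needed is to impose the non-vanishing hypothesis on $b_j,c_j$ for exactly the indices at which \eqref{offdiag} holds (all of $1,\ldots,n-1$ in case (a), and all of them except $i$ in case (b), the pair $b_i,c_i$ in case (b) being unconstrained by \eqref{offdiag}), and to transcribe the ``except (b$'$), (b$''$)'' exclusion without alteration. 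With those observations there is nothing further to prove.
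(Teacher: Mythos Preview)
Your proposal is correct and matches the paper's own approach exactly: the paper also derives Theorem~\ref{th:max} as an immediate consequence of \cite[Theorem~7]{CaRaSeS} together with the characterization of unitarily irreducible dichotomous arrowhead matrices in Theorem~\ref{th:diur}. Your additional bookkeeping remarks about reconciling ``$a_j$ distinct'' with \eqref{abc} and about which indices the $b_j,c_j\neq 0$ condition applies to are helpful clarifications but not required.
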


Condition \eqref{offdiag} of Proposition~\ref{p:dich} guarantees the ``diagonalization after rotation''
of the Hermitian part of $A$, while condition \eqref{coni} is responsible for the fact that the resulting matrix has at most two distinct eigenvalues. A more detailed analysis shows that $k(A)$ can be computed explicitly under condition \eqref{offdiag} alone (provided that it holds for all $j=1,\ldots,n-1$). Let us first consider the leading case when all $b_j, c_j$ are different from zero.

\begin{thm}\label{th:dak}
Let an arrowhead matrix \eqref{ah} satisfy \eqref{offdiag} with $b_j,c_j\neq 0$; $j=1,\ldots,n-1$. Then $k(A)$ equals the sum of multiplicities of the largest and the smallest eigenvalues of
$\im (e^{-i\theta}A)$.
\end{thm}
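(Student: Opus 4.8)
The plan is to exploit the fact that condition \eqref{offdiag} is exactly what makes $\re(e^{-i\theta}A)$ an arrowhead matrix with \emph{zero off-diagonal seeds in the first $n-1$ rows}, i.e.\ diagonal; so after the rotation $A \mapsto e^{-i\theta}A$ and a translation/scaling we may assume $\re A = H = \diag[h_1,\ldots,h_n]$ with $h_j = \re a_j$, while $\im A = K$ is an arrowhead matrix of the form \eqref{K} whose off-diagonal seeds $m_j$ are all nonzero (this last point uses $b_j,c_j\neq 0$, exactly as in the proof of Theorem~\ref{th:diur}). The Gau--Wu number is unchanged by unitary similarity and by the affine transformations $A\mapsto e^{-i\theta}(A-cI)$, so it suffices to compute $k(A)$ for such $A=H+iK$.

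The key structural input is the characterization of $\partial W(A)$: a point $f_A(x)$ lies on $\partial W(A)$ if and only if $x$ is an eigenvector of $\re(e^{i\phi}A)$ for the largest (equivalently, for some $\phi$, an extreme) eigenvalue, for some real $\phi$ — the standard supporting-line description of the numerical range. So I would first identify which directions $\phi$ actually contribute corner/flat-edge behavior. Because $\re A = H$ is diagonal with the $e_j$ as eigenvectors, the supporting lines in the two directions $\phi=0,\pi$ are attained precisely at the standard basis vectors $e_j$ with $h_j$ maximal (resp.\ minimal). The heart of the argument is then: (1) the orthonormal family realizing $k(A)$ can be taken to consist of eigenvectors attached to the two \emph{horizontal} supporting lines $\re z = \max h_j$ and $\re z = \min h_j$, because for any other direction $\phi\in(0,\pi)$ the matrix $\re(e^{i\phi}A)=\cos\phi\,H - \sin\phi\,K$ is, up to the harmless $H$-part, a genuine arrowhead matrix with all seeds nonzero, hence (by Lemma~\ref{l:eig}, or rather its $K$-analogue \eqref{xjn'}) has \emph{simple} extreme eigenvalues with eigenvectors having all coordinates nonzero — such an eigenvector can contribute at most one vector to an orthonormal family and, crucially, cannot be extended; and (2) among the horizontal directions, the eigenspaces of $H$ for $h_{\max}$ and $h_{\min}$ are spanned by the corresponding $e_j$'s, are mutually orthogonal, and each $e_j$ in them does lie on $\partial W(A)$ since it is an eigenvector of $H=\re A$ for an extreme eigenvalue. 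Counting these $e_j$ gives exactly (mult of $h_{\max}$) $+$ (mult of $h_{\min}$), which is the asserted sum of multiplicities of the largest and smallest eigenvalues of $K$'s companion... wait, of $\im(e^{-i\theta}A)$ — here one must be careful: the theorem's statement refers to eigenvalues of $K=\im(e^{-i\theta}A)$, whereas the natural count above is in terms of eigenvalues of $\re(e^{-i\theta}A)=H$. I would reconcile this by noting that the roles of real and imaginary part are interchanged by the extra rotation $e^{-i\theta}A \mapsto i e^{-i\theta}A = e^{i(\pi/2-\theta)}A$, under which \eqref{offdiag} still holds (the seeds rotate) and $H\leftrightarrow -K$; so "largest and smallest eigenvalues of $\im(e^{-i\theta}A)$" is the same count, and the cleanest exposition is to set things up from the start so that the two parallel supporting lines of interest are the ones in the direction fixed by \eqref{offdiag}, making $K$ the diagonalized-after-rotation part.

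The main obstacle is step~(1): proving that no vector attached to a non-horizontal supporting direction can belong to a maximal orthonormal $\partial W$-family that is strictly larger than the horizontal count. The tool is the general principle (going back to \cite{GauWu13}, and used throughout \cite{CaRaSeS}) that if $x_1,\ldots,x_k$ are orthonormal with all $f_A(x_j)\in\partial W(A)$, then they partition into groups, each group lying on a single supporting line, and within a group the vectors are eigenvectors of a common $\re(e^{i\phi}A)$ for its maximal eigenvalue; the size of a group is at most the multiplicity of that maximal eigenvalue. For $\phi\in(0,\pi)$, $\re(e^{i\phi}A)=\cos\phi\, H-\sin\phi\, K$ — and while this is not quite arrowhead because $H$ is diagonal but the whole thing still has arrowhead \emph{shape} (diagonal plus last row/column), with last-row/column seeds $-\sin\phi\, m_j\neq 0$ — so by the $K$-version of Lemma~\ref{l:eig} its eigenvalues off the diagonal are simple, and its extreme eigenvalues are never among the diagonal entries (a convexity/interlacing argument, or directly from the secular equation \eqref{xn'} whose left side is strictly monotone between poles). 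Hence each non-horizontal group has size exactly $1$. One then has to rule out combining a size-$1$ non-horizontal vector with the horizontal ones to beat the count — this is where orthogonality bites: a non-horizontal extreme eigenvector of $\cos\phi\,H-\sin\phi\,K$ has all coordinates nonzero, hence is not orthogonal to any $e_j$, so it cannot be adjoined to a family built from the $e_j$'s spanning the $h_{\max}$- and $h_{\min}$-eigenspaces once that family already has full size in those two eigenspaces. Assembling these observations — all groups on $\phi\notin\{0,\pi\}$ lines have size $1$ and are "entangled" with every $e_j$, so any maximal family is forced onto the two horizontal lines and there realizes precisely the two multiplicities — completes the proof. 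I expect the bookkeeping for the degenerate sub-case where $h_{\max}=h_{\min}$ (so $H$ is scalar and $A$ is essentially Hermitian after rotation, giving $k(A)=n$) and the case where several $m_j$ vanish — excluded here but flagged for the companion theorem — to be the only remaining fussy points.
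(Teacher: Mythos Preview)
Your setup is the same as the paper's: rotate so that $H=\re A$ is diagonal and $K=\im A$ is a Hermitian arrowhead with all seeds $m_j\neq 0$, and obtain the lower bound $k(A)\geq k$ from the standard basis vectors $e_j$ with $j\in J$. You also correctly note that for every non-horizontal direction the matrix $\cos\phi\,H-\sin\phi\,K$ is an arrowhead with nonzero seeds, so its extreme eigenvalue is simple and the corresponding eigenvector has no zero coordinates.

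The gap is in the upper bound. From ``all coordinates nonzero'' you conclude only that a non-horizontal eigenvector cannot be orthogonal to any single $e_j$; this prevents adjoining such a vector to a \emph{full} basis of $L_+\oplus L_-$, but it does not bound the number of non-horizontal vectors that can sit together in an orthonormal family. Nothing in your outline rules out, say, $n$ mutually orthogonal vectors each coming from a different direction $\phi\notin\{0,\pi\}$ (and in $\C^n$ one certainly can find $n$ orthonormal vectors all of whose entries are nonzero). The paper closes this hole with an observation you are missing: normalizing the extreme eigenvector of $H+tK$ so that $x_n=1$, the first $n-1$ entries are $tm_j/(\xi_\pm(t)-h_j-tk_j)$, hence all \emph{positive} for $x^+(t)$ and all \emph{negative} for $x^-(t)$ (for $t>0$, $m_j>0$). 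Consequently $\langle x^+(t_1),x^+(t_2)\rangle>0$ and $\langle x^-(t_1),x^-(t_2)\rangle>0$ for all $t_1,t_2>0$, so an orthonormal family can contain at most one vector from each of these two one-parameter families. Only after this does the counting go through: at least $k-1$ members of any family of size $>k$ must lie in $L_+\cup L_-$, forcing a full basis of $L_+$ or $L_-$ into the family, which then excludes any $x^\pm(t)$ by the nonzero-coordinate argument and yields the contradiction.

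A minor point: you flagged the $\re$/$\im$ mismatch between statement and natural proof. Your proposed fix via an extra quarter-rotation does not actually reconcile the two, since \eqref{offdiag} pins down $\theta$ (mod $\pi$) and makes $\re(e^{-i\theta}A)$, not $\im(e^{-i\theta}A)$, diagonal; the paper's own proof indeed works with the extremal eigenvalues of $H=\re(e^{-i\theta}A)$.
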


\begin{proof} Multiplying $A$ by an appropriate non-zero scalar, we may without loss of generality suppose that in \eqref{offdiag} {$\theta=0$}, i.e., $c_j=-\overline{b_j}$. The matrix $H:=\re A
$ is then diagonal. Denoting by $J\subseteq\{1,\ldots,n\}$ the set of indices corresponding to the extremal (maximal and minimal)
diagonal entries of $H$, we observe that $f_A(e_j)$ lie on the vertical supporting lines $\ell_1,\ell_2$ of $W(A)$, and thus on its boundary, for all $j\in J$.  So, $k(A)\geq k$, where $k$ is the cardinality of $J$.

Before proving the reverse inequality, we observe that an appropriate diagonal unitary similarity allows us to adjust the arguments of all $b_j$ to the value $\pi/2$, thus guaranteeing $c_j=b_j=im_j$ with $m_j>0$ for $j=1,\ldots,n-1$; here we let $K=\im A$ and define $m_j$ as in equation \eqref{K}. Let $h_j$ and $k_j$ respectively denote the diagonal entries of $H$ and $K$.

For every $t>0$, the interlacing theorem gives us that the maximal eigenvalue $\xi_+(t)$ of the matrix
\eq{htk} H+tK=\begin{bmatrix}h_1+tk_1 & 0 & \ldots & 0 & tm_1 \\
0 & h_2+tk_2 & \ldots & 0 & tm_2 \\
\vdots & \vdots & \ddots & \vdots & \vdots \\
0 & 0 & \ldots & h_{n-1}+tk_{n-1} & tm_{n-1} \\
tm_1 & tm_2 & \ldots & tm_{n-1} & h_n+tk_n \end{bmatrix} \en
is strictly bigger than the diagonal entries $h_j+tk_j$, $j=1,\ldots,n-1$. Formulas \eqref{eig} for the matrix \eqref{htk} take the form
\[ x^+_j(t)=\frac{t^2m_j^2}{\xi_+(t)-h_j-tk_j},\ j=1,\ldots,n-1;\quad x^+_n=1, \]
implying that all the entries of the respective eigenvector $x(t)$ are positive.
So $\scal{x^+(t_1),x^+(t_2)}>0$ for any $t_1,t_2>0$.

Similarly, the minimal eigenvalue $\xi_-(t)$ of the matrix \eqref{htk} is strictly smaller than $h_j+tk_j$, $j=1,\ldots,n-1$, the respective eigenvector $x^-(t)$ has the entries
\[ x^-_j(t)=\frac{t^2m_j^2}{\xi_-(t)-h_j-tk_j},\ j=1,\ldots,n-1;\quad x^-_n=1, \]
the first $n-1$ of them being negative, and so $\scal{x^-(t_1),x^-(t_2)}>0$ for any $t_1,t_2>0$ as well.

Denote by $L_\pm$ the eigenspace of $H$ corresponding to its maximal/minimal eigenvalue, and let  $X\subseteq\C^n$ be an orthonormal set, the elements of which are mapped by $f_A$ into $\partial W(A)$. This set contains at most one (normalized) vector from the family $x^+(t)$, $t>0$, and similarly at most one from the family $x^-(t)$, $t>0$,
with the rest lying in ${L_+}\cup {L_-}$.

Suppose $X$ consists of more than $k$ vectors. Then at least $k-1$ of them lie in ${L_+}\cup { L_-}$ and, since $\dim{ L_+}+\dim{L_-}=k$,  the set $X$ must contain a basis of ${L_+}$ or  ${L_-}$ (or both). Any other vector of $X$ then has to be orthogonal to this eigenspace, and therefore to at least one standard basis vector. Since all the entries of $x^\pm(t)/\norm{x^\pm(t)}$ are non-zero, neither of these vectors may lie in $X$. Thus $X\subseteq{L_+}\cup {L_-}$, and as such $X$ cannot contain more than $k$ elements. This contradiction completes the proof.
\end{proof}

From the proof of Theorem~\ref{th:dak} it is clear that any set $X$ of $k(A)$ orthonormal vectors satisfying $f_A(X)\subseteq\partial W(A)$ is the union of bases of $L_+$ and $L_-$.

Suppose now that there are some zero off-diagonal pairs $\{b_j,c_j\}$ in \eqref{ah}; denote the set of respective indices $j$ by $J_0$ and its complement to $\{1,\ldots,n-1\}$ by $J_1$. Then a permutational similarity
reduces $A$ to the direct sum of the diagonal matrix $A_0$ with the diagonal entries $a_j, j\in J_0$, and an arrowhead matrix $A_1$ obtained from $A$ by deleting its rows and columns numbered by $j\in J_0$.
In this situation, the following result of Lee \cite{Lee} (see Proposition 3.1 there, or \cite[Theorem 2]{CaRaSeS}) applies.  For convenience of reference, we change the notation slightly from what was used in \cite{Lee,CaRaSeS}.

\begin{prop}\label{p:dirsum}
Let $A$ be unitarily similar to $A_0\oplus A_1$, with $A_0$ being normal. Then $ k(A)=k'(A_0)+k'(A_1)$.
\end{prop}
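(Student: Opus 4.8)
Since this is Lee's theorem (\cite[Proposition 3.1]{Lee}; equivalently \cite[Theorem 2]{CaRaSeS}), the natural course is to cite it; we outline the plan of a self-contained proof. First, write $\C^n=\C^{p}\oplus\C^{q}$ compatibly with $A=A_0\oplus A_1$, so that $W(A)=\conv(W(A_0)\cup W(A_1))$ and, $A_0$ being normal, $W(A_0)$ is the (possibly degenerate) convex polygon spanned by the eigenvalues of $A_0$. The key preliminary step is to localize each unit vector $x=u\oplus v$, $u\in\C^p$, $v\in\C^q$, with $f_A(x)\in\partial W(A)$: pick $\theta$ with $\re(e^{-i\theta}f_A(x))=\max_{w\in W(A)}\re(e^{-i\theta}w)$ and put $M_i=\lambda_{\max}(\re(e^{-i\theta}A_i))$. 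Then
\[ \re(e^{-i\theta}f_A(x))=\scal{\re(e^{-i\theta}A_0)u,u}+\scal{\re(e^{-i\theta}A_1)v,v}\le M_0\norm{u}^2+M_1\norm{v}^2\le\max(M_0,M_1), \]
and, since $\max_{w\in W(A)}\re(e^{-i\theta}w)=\max(M_0,M_1)$, this chain is an equality throughout. That forces $u$ (when $\ne0$) into the top eigenspace of $\re(e^{-i\theta}A_0)$ and $v$ (when $\ne0$) into the top eigenspace of $\re(e^{-i\theta}A_1)$, and in addition $M_0=\max(M_0,M_1)$ if $u\ne0$ and $M_1=\max(M_0,M_1)$ if $v\ne0$. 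Consequently $x$ either has $v=0$, with $u/\norm{u}$ realizing a point of $\partial W(A_0)$ that still lies on $\partial W(A)$; or has $u=0$, with $v/\norm{v}$ realizing a point of $\partial W(A_1)$ on $\partial W(A)$; or $f_A(x)$ lies on one of the straight ``bridging'' segments of $\partial W(A)$, in which case $u/\norm{u}$ and $v/\norm{v}$ realize the two endpoints of that segment, one on $\partial W(A_0)$ and one on $\partial W(A_1)$.

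With this trichotomy in hand, the bound $k(A)\ge k'(A_0)+k'(A_1)$ is the easy half: choose orthonormal families in $\C^p$ and $\C^q$ that realize $k'(A_0)$ and $k'(A_1)$, arranged --- this is precisely what the definition of $k'$ provides --- so that their $f_{A_0}$- and $f_{A_1}$-images lie on supporting lines of $W(A_0)$, resp.\ $W(A_1)$, in directions in which these are also supporting lines of $W(A)$; then the two families, embedded in the two summands of $\C^n$, form an orthonormal set of cardinality $k'(A_0)+k'(A_1)$ which $f_A$ maps into $\partial W(A)$. For the reverse inequality one takes an optimal orthonormal family $X$ for $A$, sorts its members according to the trichotomy, and must produce from their $\C^p$- and $\C^q$-components genuine orthonormal boundary-realizing families for $A_0$ and $A_1$ whose cardinalities add up to at least $\abs{X}=k(A)$. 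The naive attempt --- pass to the spans $U=\Span\{\,u: u\oplus v\in X\,\}$ and $V=\Span\{\,v: u\oplus v\in X\,\}$ and note $\dim U+\dim V\ge\abs{X}$ --- does not suffice, because a generic unit vector of $U$ may map into the interior of the polygon $W(A_0)$ rather than onto its boundary.

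The hard part, which I expect to be the main obstacle, is exactly the members of $X$ sitting on bridging segments: their $\C^p$- and $\C^q$-parts are genuinely entangled, so neither part by itself is a legitimate contribution to a single summand. Handling them is the technical heart of the proof --- one has to control which supporting directions an optimal $X$ can simultaneously exploit, then replace the bridging-segment members, without decreasing the total count, by pure vectors supported in the individual summands, using the polygonal structure of $W(A_0)$ (this is where normality of $A_0$ enters decisively) to make the final count match. All of this is carried out in \cite[Proposition 3.1]{Lee}; for the present paper it is enough to quote the proposition as stated, since in all our applications $A_0$ is diagonal and $k'(A_0)$, $k'(A_1)$ will be read off directly.
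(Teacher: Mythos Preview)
The paper does not prove this proposition at all: it is stated as a quotation of Lee's result \cite[Proposition 3.1]{Lee} (equivalently \cite[Theorem 2]{CaRaSeS}), with only the additional remark that $k'(A_0)$ equals the number of eigenvalues of $A_0$ (with multiplicity) lying on $\partial W(A)$. Your proposal takes exactly the same route---cite Lee---and your supplementary outline of the argument is a reasonable (and correctly self-aware) sketch of why the result is not entirely trivial; nothing more is needed here.
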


Here $k'(A_j)$ stands for the maximal number of orthonormal vectors whose images under $f_{A_j}$ lie in $\partial W(A)\cap\partial W(A_j)$, $j=0,1$.

Though not stated in \cite{Lee} explicitly, from the proof outline of Proposition~\ref{p:dirsum} there it can be seen that $k'(A_0)$ is nothing but the number of the eigenvalues of $A_0$ (counting their multiplicities) lying in $\partial W(A)$.  The following theorem is therefore an immediate corollary of Theorem~\ref{th:dak}.

\begin{thm}\label{th:dako}Let an arrowhead matrix \eqref{ah} satisfy \eqref{offdiag}, with $S=\{j\colon b_j=c_j=0\}$. Denote by $A_0$ the (diagonal) submatrix formed by the rows and columns of $A$ corresponding to elements of $S$, and let $A_1$ denote the (arrowhead) submatrix formed by the remaining rows and columns of $A$.  Then $$k(A)=k'(A_1)+\#\{a_j\mid j\in S, a_j\in \partial W(A)\}.$$
\end{thm}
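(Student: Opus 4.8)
The plan is to combine Proposition~\ref{p:dirsum} with Theorem~\ref{th:dak}, essentially just unwinding the notation. First I would observe that since $b_j=c_j=0$ for $j\in S$, a permutational similarity (not touching the structure of the remaining arrowhead part) decomposes $A$ unitarily as $A_0\oplus A_1$, where $A_0=\diag[a_j:j\in S]$ is normal (indeed diagonal) and $A_1$ is an arrowhead matrix whose off-diagonal pairs $\{b_j,c_j\}$ are all nonzero. Since unitary similarity does not change the numerical range or the Gau--Wu number, Proposition~\ref{p:dirsum} applies verbatim and yields $k(A)=k'(A_0)+k'(A_1)$.

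Next I would identify the two terms. For the first term, I would invoke the remark made just before the statement: $k'(A_0)$ equals the number of eigenvalues of $A_0$ (with multiplicity) lying on $\partial W(A)$, and the eigenvalues of the diagonal matrix $A_0$ are exactly the $a_j$ for $j\in S$. Hence $k'(A_0)=\#\{a_j\mid j\in S,\ a_j\in\partial W(A)\}$, which is the second summand in the claimed formula. For the term $k'(A_1)$, I note that $A_1$ still satisfies \eqref{offdiag} (the condition is inherited by any principal arrowhead submatrix obtained by deleting some of the rows/columns $1,\dots,n-1$, since the angle $2\theta+\pi$ and the moduli conditions are unchanged), and now all its relevant off-diagonal entries are nonzero, so $A_1$ is of the type covered by Theorem~\ref{th:dak}.

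The one genuine subtlety—and the step I expect to require the most care—is that $k'(A_1)$ is defined relative to $\partial W(A)\cap\partial W(A_1)$, whereas Theorem~\ref{th:dak} computes $k(A_1)$, which a priori only knows about $\partial W(A_1)$. I would resolve this by reusing the description at the end of the proof of Theorem~\ref{th:dak}: every optimal family for $A_1$ consists of a union of bases of the top and bottom eigenspaces $L_\pm$ of $\im(e^{-i\theta}A_1)$, and the corresponding points $f_{A_1}(e_j)$ (for $j$ an extremal index) lie on the two supporting lines of $W(A_1)$ perpendicular to the direction determined by $\theta$. Because $W(A)=\conv(W(A_0)\cup W(A_1))\supseteq W(A_1)$, these two lines are also supporting lines of $W(A)$ exactly when the extreme real parts (after rotation) of the $a_j$, $j\in S$, do not strictly exceed those occurring in $A_1$; but in the generic situation, and more importantly \emph{for the points actually used}, the relevant images land on $\partial W(A_1)$, and one checks these coincide with $\partial W(A)$ at those points whenever they are on $\partial W(A)$ at all. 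The clean way to phrase this: $k'(A_1)$ counts orthonormal vectors whose $f_{A_1}$-images lie on $\partial W(A)\cap\partial W(A_1)$, and since the extremal vertical supporting lines of $W(A_1)$ either are or are not supporting lines of $W(A)$, the count $k'(A_1)$ equals $k(A_1)$ when they are and is smaller otherwise; however, the remark preceding the theorem together with the structure of $A$ guarantee that the bookkeeping of boundary points is already accounted for in the $a_j$-term, so that $k'(A_1)+k'(A_0)$ reassembles exactly to $k(A_1)+\#\{a_j:j\in S,\ a_j\in\partial W(A)\}$. I would therefore present the argument by quoting Theorem~\ref{th:dak} to get $k'(A_1)=k(A_1)$ under the hypotheses in force (all extremal supporting behavior of $W(A_1)$ being inherited by $W(A)$ along the rotation direction $\theta$, since the $a_j$ with $j\in S$ that could interfere are precisely those counted in the other term), and then simply substitute into Proposition~\ref{p:dirsum}.
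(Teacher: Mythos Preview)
Your first three steps are exactly the paper's argument: the permutational similarity gives $A\sim A_0\oplus A_1$ with $A_0$ diagonal (hence normal), Proposition~\ref{p:dirsum} yields $k(A)=k'(A_0)+k'(A_1)$, and the remark immediately preceding the theorem identifies $k'(A_0)$ with the number of eigenvalues of $A_0$ lying on $\partial W(A)$. At that point the proof is finished, since the theorem statement reads
\[
k(A)=k'(A_1)+\#\{a_j\mid j\in S,\ a_j\in\partial W(A)\},
\]
with $k'(A_1)$, not $k(A_1)$.

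Your entire final paragraph should be deleted. You are trying to prove $k'(A_1)=k(A_1)$, but this is neither required nor true in general. If some $a_j$ with $j\in S$ satisfies $\re(e^{-i\theta}a_j)>\max\sigma\bigl(\re(e^{-i\theta}A_1)\bigr)$, then the right vertical supporting line of $W(A_1)$ is interior to $W(A)$, and the vectors spanning $L_+$ from Theorem~\ref{th:dak} no longer map into $\partial W(A)$; in that case $k'(A_1)<k(A_1)$. Your claim that ``the bookkeeping of boundary points is already accounted for in the $a_j$-term'' is simply wrong: that term counts eigenvalues of $A_0$, not vectors associated with $A_1$, so nothing compensates for the lost contribution. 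The reference to Theorem~\ref{th:dak} in the paper's lead-in is only meant to indicate that $A_1$ now falls under its scope (all its off-diagonal pairs are nonzero), not that $k'(A_1)$ and $k(A_1)$ coincide.
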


Note that according to \eqref{offdiag} all off-diagonal pairs $\{b_j,c_j\}$, except maybe one, of a dichotomous arrowhead matrix have to be ``balanced.''
It is worth mentioning that in the case when an arrowhead matrix is not dichotomous, and its off-diagonal pairs are all ``unbalanced'' in the same way, $k(A)$ attains its minimal possible value 2.

\begin{thm}\label{th:arrow}
Let $a_1,\ldots,a_{n-1}$ in \eqref{ah} be distinct, $C$ denote their convex hull, and $J_0=\{j\mid a_j\in \partial C\}$.  Assume that either (a) $\abs{b_j}\geq \abs{c_j}$ for all $j=1,\ldots,n-1$ or (b) $\abs{c_j}\geq \abs{b_j}$ for all $j=1,\ldots,n-1$.  Furthermore, assume that these inequalities are strict for all $j\in J_0$.  Then $k(A)=2$.
\end{thm}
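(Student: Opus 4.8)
The statement says that under the "all off-diagonal pairs unbalanced in the same way" hypothesis (say case (a): $\abs{b_j}\ge\abs{c_j}$ for all $j$, with strict inequality for the boundary indices $j\in J_0$), we get $k(A)=2$. Since $k(A)\ge 2$ always, the whole content is the upper bound $k(A)\le 2$. My strategy is to produce a supporting line of $W(A)$ that touches $\partial W(A)$ in only two of the $f_A(x_j)$, or more precisely to show that no three orthonormal vectors can all map into $\partial W(A)$. The natural tool is the characterization of boundary points of $W(A)$ via the eigenvalues of $\re(e^{i\varphi}A)$: $z=f_A(x)\in\partial W(A)$ forces, for the supporting angle $\varphi$ at $z$, that $x$ is an eigenvector of $H_\varphi:=\re(e^{i\varphi}A)$ for its maximal eigenvalue. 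So I would parametrize by $\varphi$ and analyze the arrowhead matrix $H_\varphi$, which is still arrowhead with diagonal $\re(e^{i\varphi}a_j)$ and $j$-th off-diagonal entry $\tfrac12(e^{i\varphi}b_j+e^{-i\varphi}\overline{c_j})$ (up to transpose), together with its conjugate entry.

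First I would reduce to a normalized form: by a rotation and scaling we may assume things are set up so that the relevant supporting directions are vertical, and by a diagonal unitary similarity adjust arguments; the key structural fact to extract is that because $\abs{b_j}>\abs{c_j}$ for $j\in J_0$, the map $\varphi\mapsto$ (off-diagonal entry of $H_\varphi$ in slot $j$) never vanishes for boundary indices, whereas the diagonal entries $\re(e^{i\varphi}a_j)$ for $j\in J_0$ are the ones that can be extremal (since $a_j\in\partial C$). The plan is then: if $z_1,z_2,z_3=f_A(x_1),f_A(x_2),f_A(x_3)\in\partial W(A)$ with $x_1,x_2,x_3$ orthonormal, look at the supporting angles. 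If two of them, say $z_1,z_2$, share the same supporting angle $\varphi$, then $x_1,x_2$ both lie in the top eigenspace of $H_\varphi$; I want to argue (via Lemma~\ref{l:eig}-type reasoning applied to $H_\varphi$, whose off-diagonal entries in the boundary slots are nonzero) that this eigenspace together with the constraint forces the third vector $x_3$ to be orthogonal to some $e_k$, hence to have a zero coordinate, which is incompatible with $x_3$ being a max-eigenvector of its own $H_{\varphi'}$ having all nonzero entries in the relevant positions — exactly the style of argument in the proof of Theorem~\ref{th:dak}. If all three supporting angles are distinct, I'd use a counting/continuity argument: as $\varphi$ ranges over a half-period, the maximal eigenvalue $\xi_+(\varphi)$ of $H_\varphi$ is simple except at finitely many $\varphi$, and the corresponding eigenvector moves through a family with all boundary-slot coordinates nonzero, so at most one orthonormal vector can come from each such "generic" branch; the strict-inequality hypothesis at $J_0$ is what prevents the degeneracies (a vanishing off-diagonal entry would split off an $e_k$ as an eigenvector) that would otherwise allow more.

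The cleanest route is probably to mimic Theorem~\ref{th:dak} directly: show that for every supporting angle $\varphi$ that is actually realized, the maximal eigenvalue of $H_\varphi$ is simple and its eigenvector $x(\varphi)$ has all coordinates in positions indexed by $J_0\cup\{n\}$ nonzero and, moreover, of a fixed sign pattern, so that $\scal{x(\varphi_1),x(\varphi_2)}\ne0$ for distinct realized angles; combined with the observation that an orthonormal set mapping into $\partial W(A)$ can contain at most one vector per realized supporting angle (because two vectors sharing an angle would be two orthonormal eigenvectors of $H_\varphi$ for its top eigenvalue, contradicting simplicity), and using that the diagonal entries $\re(e^{i\varphi}a_j)$ with $j\notin J_0$ are never extremal (they sit strictly inside $C$ after rotation, hence are dominated) so no $e_j$ with $j\notin J_0$ maps to the boundary either — I want to conclude the set has size $\le 2$, the "$2$" coming from the two (opposite) extreme supporting directions of the horizontal extent where a pair $e_j$ or $x(\varphi), x(-\varphi)$-type vectors can both appear. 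The sign/interlacing bookkeeping (tracking exactly why the maximal eigenvalue of the arrowhead $H_\varphi$ is simple given that its boundary-slot off-diagonal entries are nonzero, and why its eigenvector has nonvanishing boundary coordinates) is the routine part; the genuine obstacle is the case analysis when a supporting angle $\varphi$ is degenerate, i.e. when $\re(e^{i\varphi}a_j)=\re(e^{i\varphi}a_k)$ for several $j,k\in J_0$ simultaneously or when the max eigenvalue of $H_\varphi$ coincides with a diagonal entry — here one must carefully use that the $a_j$ are distinct and the strict unbalancing at $J_0$ to rule out extra boundary-mapping orthonormal vectors, and this is where I expect the argument to require the most care.
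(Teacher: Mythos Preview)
Your outline gets the first half right and matches the paper: for every $\theta$, the off-diagonal entries of $H_\theta:=\re(e^{i\theta}A)$ in the slots $j\in J_0$ never vanish (because $\abs{b_j}\neq\abs{c_j}$ there), a $2\times 2$ minor/interlacing argument then forces the maximal eigenvalue $\xi(\theta)$ strictly above each $\re(e^{i\theta}a_j)$ for $j\in J_0$, hence above all of them by the definition of $J_0$; so $\xi(\theta)$ is simple and its eigenvector $x(\theta)$ is given by the explicit Lemma~\ref{l:eig} formula with last coordinate $1$. The degeneracies you flag at the end (coincidences among $\re(e^{i\theta}a_j)$, etc.) are therefore harmless and need no separate case analysis.

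The genuine gap is your non-orthogonality step. You propose to ``mimic Theorem~\ref{th:dak}'': there the eigenvectors $x^{\pm}(t)$ had \emph{real} entries of a fixed sign, so $\scal{x(t_1),x(t_2)}>0$ was immediate. That works in Theorem~\ref{th:dak} only because of the balance $\abs{b_j}=\abs{c_j}$, which after a diagonal unitary similarity makes $H+tK$ real. Here the hypothesis is precisely the opposite---the pairs are \emph{un}balanced---so the $j$-th entry of $x(\theta)$ is $\dfrac{e^{-i\theta}\overline{c_j}+e^{i\theta}b_j}{2(\xi(\theta)-\re(e^{i\theta}a_j))}$, a genuinely complex number with $\theta$-dependent argument. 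There is no ``fixed sign pattern'' to appeal to, and your claim $\scal{x(\varphi_1),x(\varphi_2)}\neq 0$ is unsupported as stated.

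The paper's device is different and is the missing idea: compute $\scal{x(\theta_1),x(\theta_2)}$ explicitly and look at its \emph{imaginary part}. The $j$-th numerator contributes
\[
\im\!\left(e^{i(\theta_1-\theta_2)}\abs{b_j}^2+e^{i(\theta_2-\theta_1)}\abs{c_j}^2\right)=\sin(\theta_1-\theta_2)\,(\abs{b_j}^2-\abs{c_j}^2),
\]
over a positive denominator. Under hypothesis (a) or (b) all the factors $\abs{b_j}^2-\abs{c_j}^2$ have the same sign, with at least one strictly nonzero (any $j\in J_0$). Hence $\im\scal{x(\theta_1),x(\theta_2)}=0$ forces $\sin(\theta_1-\theta_2)=0$, i.e.\ $\theta_1-\theta_2\equiv 0$ or $\pi$. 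Combined with simplicity, an orthonormal set on $\partial W(A)$ can draw at most one vector from each of the two opposite supporting directions, giving $k(A)\le 2$.
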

Conditions of Theorem~\ref{th:arrow} hold in particular when $c_j=0, b_j\neq 0$ for all $j=1,\ldots n-1$. This is the case of so-called {\em pure almost normal} matrices for which the result was established in \cite{CaRaSeS}.

\begin{proof}
The real part of the matrix $e^{i\theta}A$ is
\begin{equation*}
\re\left(e^{i\theta}A\right) = \frac{1}{2}
\left[\begin{array}{ccccc}
2\re\left( a_1 e^{i\theta} \right)& 0 & \ldots & 0 & e^{-i\theta}\overline{c_1} + e^{i\theta}b_1\\
0 & 2\re\left( a_2 e^{i\theta} \right) & \ldots & 0 & e^{-i\theta}\overline{c_2} + e^{i\theta}b_2\\
\vdots & \vdots & \ddots & \vdots &\vdots\\
0 & 0 & \ldots & 2\re\left(a_{n-1} e^{i\theta}\right)& e^{-i\theta}\overline{c_{n-1}} + e^{i\theta}b_{n-1}\\
e^{i\theta}c_1 + e^{-i\theta}\overline{b_1}& e^{i\theta}c_2 + e^{-i\theta}\overline{b_2} & \ldots & e^{i\theta}c_{n-1}+e^{-i\theta}\overline{b_{n-1}} & 2\re\left( a_n e^{i\theta} \right)
\end{array}
\right].
\end{equation*}
	
Since $|b_j|\neq |c_j|$ for all $j\in J_0$, the off-diagonal entries in rows and columns corresponding to elements of $J_0$ are non-zero {for all values of $\theta$}.  Therefore, the determinant of the matrix obtained by only keeping the $j$th ($j\in J_0$) and $n$th rows and columns of the matrix $\re(e^{i\theta}A-a_je^{i\theta}I_n)$ is negative.  Due to the interlacing theorem, the maximal eigenvalue $\xi(\theta)$ of $\re(e^{i\theta}A)$ is strictly bigger than $\re(a_je^{i\theta})$ for all $j\in J_0$.   For $i\in J_1$, we have $\re(a_ie^{i\theta})<\re(a_je^{i\theta})$ for some $j\in J_0$ by definition of $J_0$; hence $\re(a_ie^{i\theta})<\xi(\theta)$ in this situation as well.  Consequently, the first $n-1$ diagonal entries of $\re(e^{i\theta}A)-\xi(\theta)I_n$ are all non-zero, implying that $\xi(\theta)$ is not an eigenvalue of the matrix formed by the first $n-1$ rows and columns of $\re(e^{i\theta}A)$. Thus the interlacing theorem tells us that $\xi(\theta)$ is a simple eigenvalue of $\re(e^{i\theta}A)$.
	
This means that each support line of $W(A)$ contains exactly one point of $\partial W(A)$.  Moreover, for the line lying to the right of $W(A)$ and forming the angle $\pi/2-\theta$ with the positive real axis, this point is generated by the unit eigenvector of $\re(e^{i\theta}A)$ corresponding to $\xi(\theta)$. Lemma \ref{l:eig} shows that this eigenvector is collinear with the vector

\[
{x}(\theta) =
	\left[\frac{\left(e^{-i\theta}\overline{c_1} + e^{i\theta}b_1\right)}{2\left(\xi(\theta)-\re\left( a_1 e^{i\theta} \right)\right)},\ldots,
	\frac{\left(e^{-i\theta}\overline{c_{n-1}} + e^{i\theta}b_{n-1}\right)}{2\left(\xi(\theta)-\re\left( a_{n-1} e^{i\theta} \right)\right)},1\right]^T.
\]
	
The scalar product of two such vectors corresponding to the angles $\theta_1$ and $\theta_2$ is
	
\eq{sp}
\scal{
{x}(\theta_1),
{x}(\theta_2)}=\sum_{j=1}^{n-1}\frac{e^{i(\theta_1-\theta_2)}|b_j|^2+2\re\left(e^{i(\theta_1+\theta_2)}b_jc_j\right)+e^{i(\theta_2-\theta_1)}|c_j|^2}{4(\xi(\theta_1)-\re( a_j e^{i\theta_1}))(\xi(\theta_2) - \re( a_j e^{i\theta_2}))}+1
\en
	
In order for this scalar product to equal zero, in particular the imaginary part of the righthand side of \eqref{sp} must equal zero. The denominator has no imaginary part, so we need only consider the part of the numerator that may not be purely real:
\begin{equation*}\label{sp2}
	e^{i(\theta_1-\theta_2)}|b_j|^2 + e^{i(\theta_2-\theta_1)}|c_j|^2.
\end{equation*}
Writing $e^{i(\theta_1-\theta_2)} = s + it$, we have
\begin{equation*}
	\im\left(e^{i(\theta_1-\theta_2)}|b_j|^2 + e^{i(\theta_2-\theta_1)}|c_j|^2\right) = t(\abs{b_j}^2 -\abs{c_j}^2).
\end{equation*}
Since $|b_j|\geq |c_j|$ for all $j$ or $|c_j|\leq |b_j|$ for all $j\in J_0\cup J_1$, with strict inequality for at least one $j\in J_0$, the imaginary part of \eqref{sp} will be zero only when $t = 0$. This occurs when $\theta_1 - \theta_2 = \pi \mod 2\pi$.
Therefore, only two mutually orthogonal vectors generating boundary points of $W(A)$ can be chosen simultaneously.
\end{proof}

\section{Gau--Wu numbers of $4\times4$ matrices}\label{s:4x4}

The Gau--Wu number of $2\times2$ matrices is always 2, and the Gau--Wu number of $3\times3$ matrices was completely classified by Wang and Wu \cite{WangWu13}.  Studying the Gau--Wu numbers of $4\times4$ matrices is the natural next step.  Our Proposition \ref{p:dirsum} and Theorem \ref{th:dako} allow us to immediately determine the Gau--Wu number for any unitarily reducible $4\times4$ matrix, which we will illustrate at the beginning of this section.
In turn, unitarily irreducible matrices will be treated later in the section.

The following terminology will be useful.  We define the \textit{base polynomial} $F_A(x:y:t)$ to be the quantity $\det\left(x(\textmd{Re }A)+y(\textmd{Im }A)+tI\right)$, where $\textmd{Re }A$ and $i\textmd{Im }A$ are the Hermitian and skew-Hermitian parts of $A$, and the \textit{boundary generating curve} $\Gamma_{F_A}^\wedge$ to be the dual to the curve $F_A(x:y:t)=0$ \cite{CaDeRaSeSpY, ChiNa12}.  Figure \ref{fig:ellipses} gives the full range of Gau--Wu numbers for matrices that are unitarily reducible to a direct sum of two unitarily irreducible $2\times2$ blocks which then individually have elliptical numerical ranges.  Note that Corollary 19 of \cite{CaDeRaSeSpY} tells us that we will also obtain this type of numerical range for any matrix (even unitarily \textit{irreducible} matrices) with a {base polynomial $F_A$} that factors into two real quadratic polynomials.

\begin{figure}[h]
\begin{subfigure}{.33\textwidth}
  \centering
  \includegraphics[width=.8\linewidth]{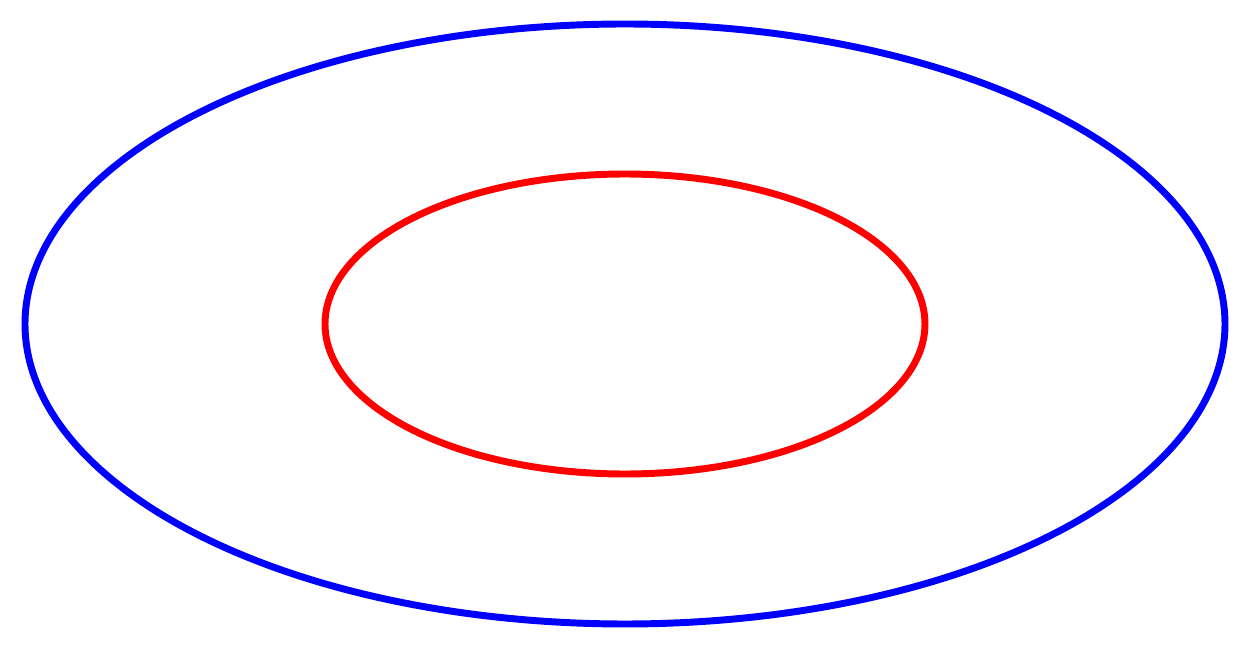}
  \caption{$k(A)=2$}
  \label{fig:sfig1a}
\end{subfigure}%
\begin{subfigure}{.33\textwidth}
  \centering
  \includegraphics[width=.8\linewidth]{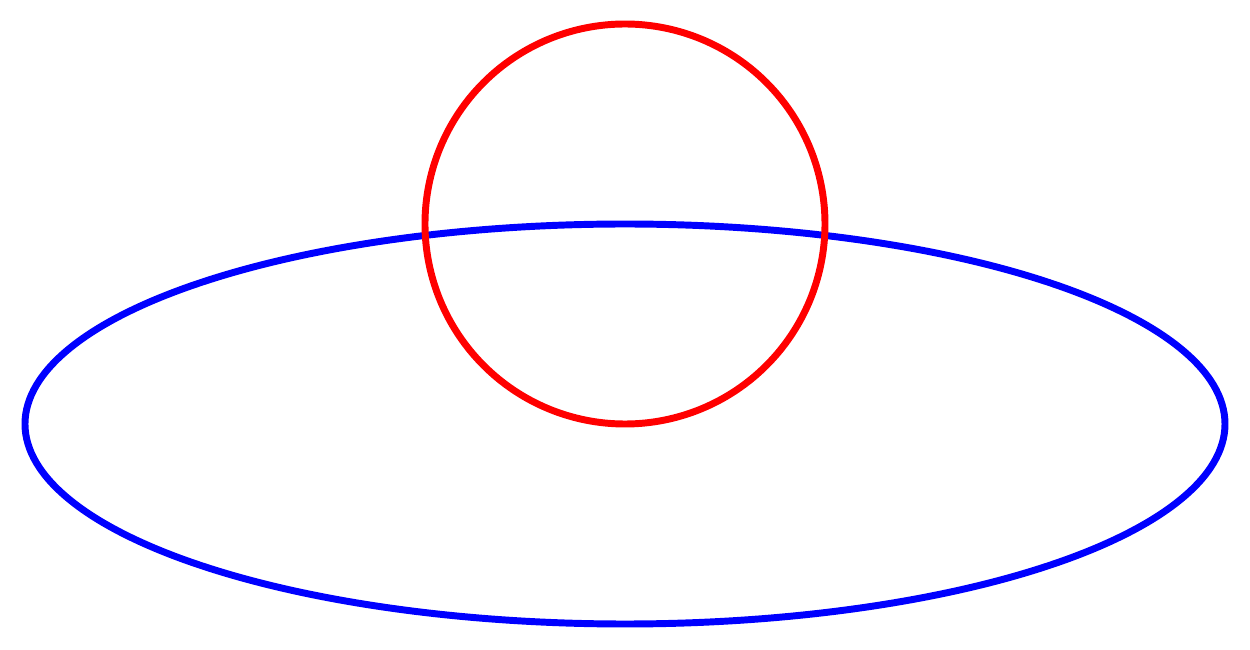}
  \caption{$k(A)=3$}
\end{subfigure}
\begin{subfigure}{.33\textwidth}
  \centering
  \includegraphics[width=.8\linewidth]{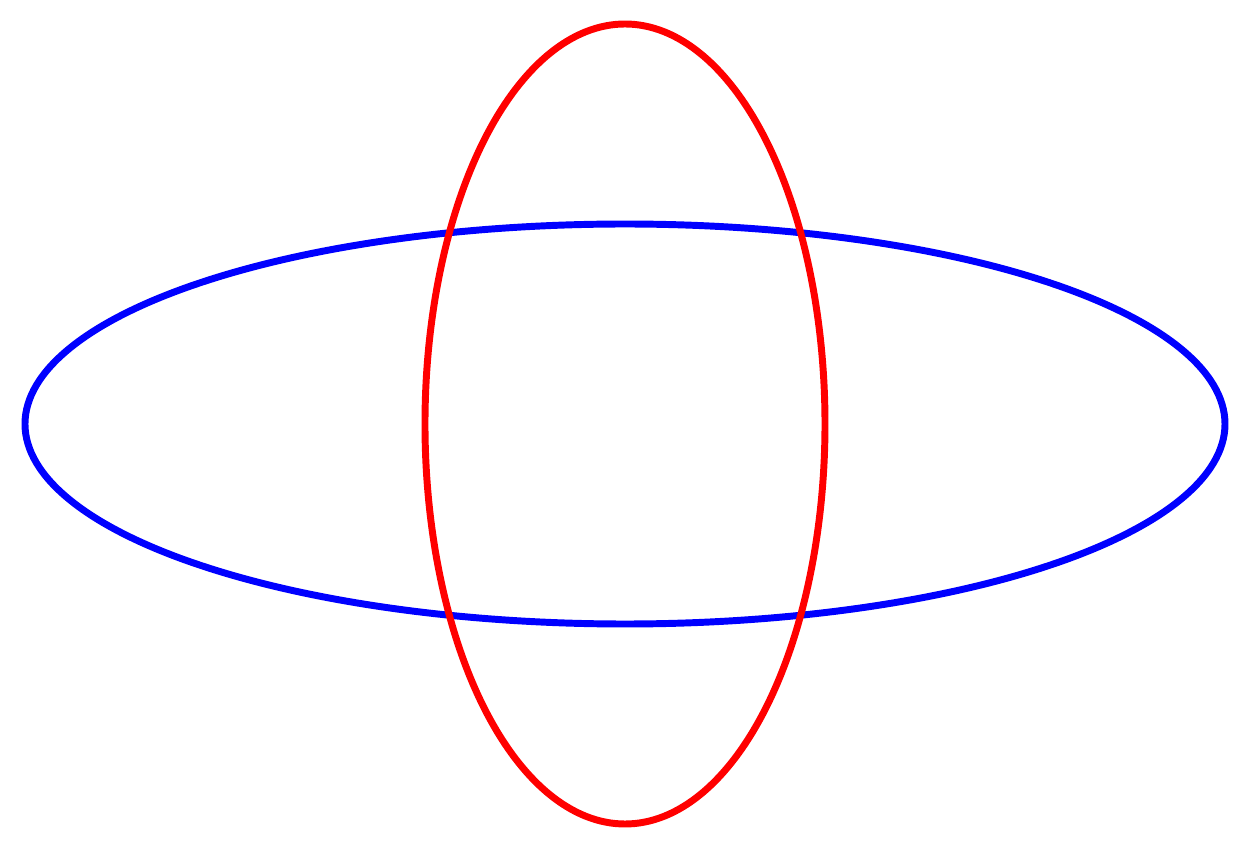}
  \caption{$k(A)=4$}
\end{subfigure}
\caption{Examples of configurations of boundary generating curves for direct sums of pairs of unitarily irreducible $2\times2$ matrices, with various Gau--Wu numbers.}
\label{fig:ellipses}
\end{figure}

For a matrix which unitarily reduces to one unitarily irreducible $2\times2$ block, and two scalar blocks, some of the primary configurations of the numerical range are illustrated in Figure \ref{fig:ellipselines}.  Note that this situation also occurs for any matrix (possibly unitarily irreducible) whose base polynomial factors into two real linear terms and one real quadratic polynomial.

\begin{figure}[h]
\begin{subfigure}{.5\textwidth}
  \centering
  \includegraphics[width=0.528\linewidth]{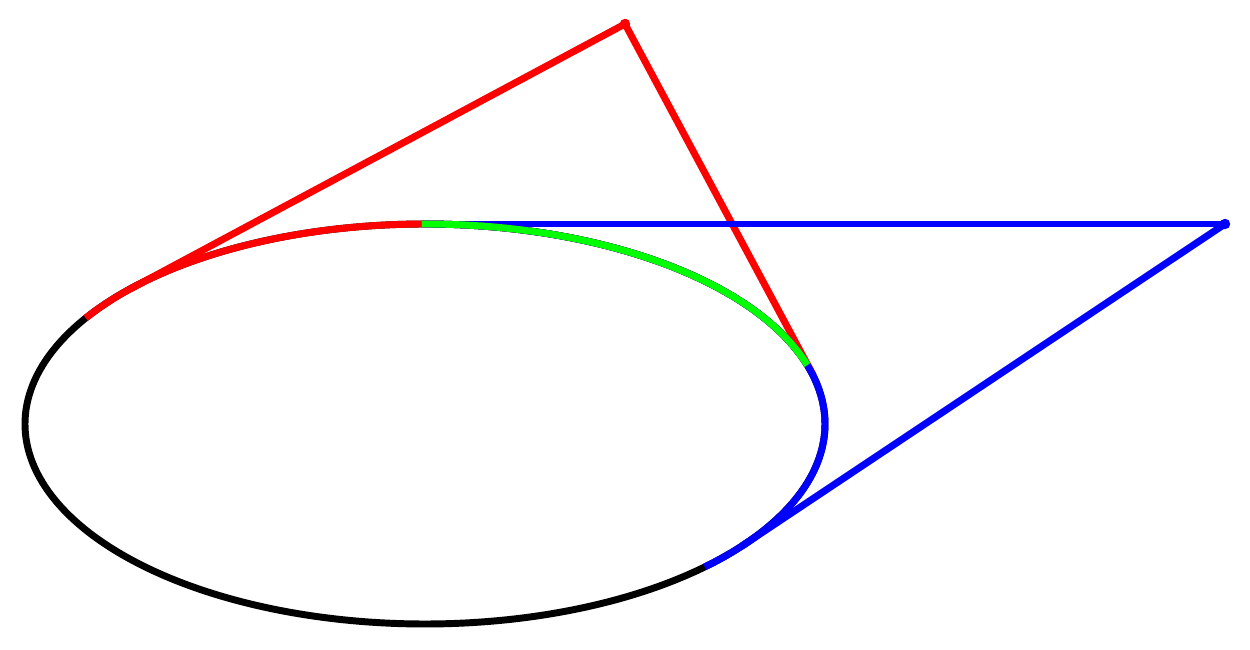}
  \caption{$k(A)=3$}
\end{subfigure}
\begin{subfigure}{.5\textwidth}
  \centering
  \includegraphics[width=0.528\linewidth]{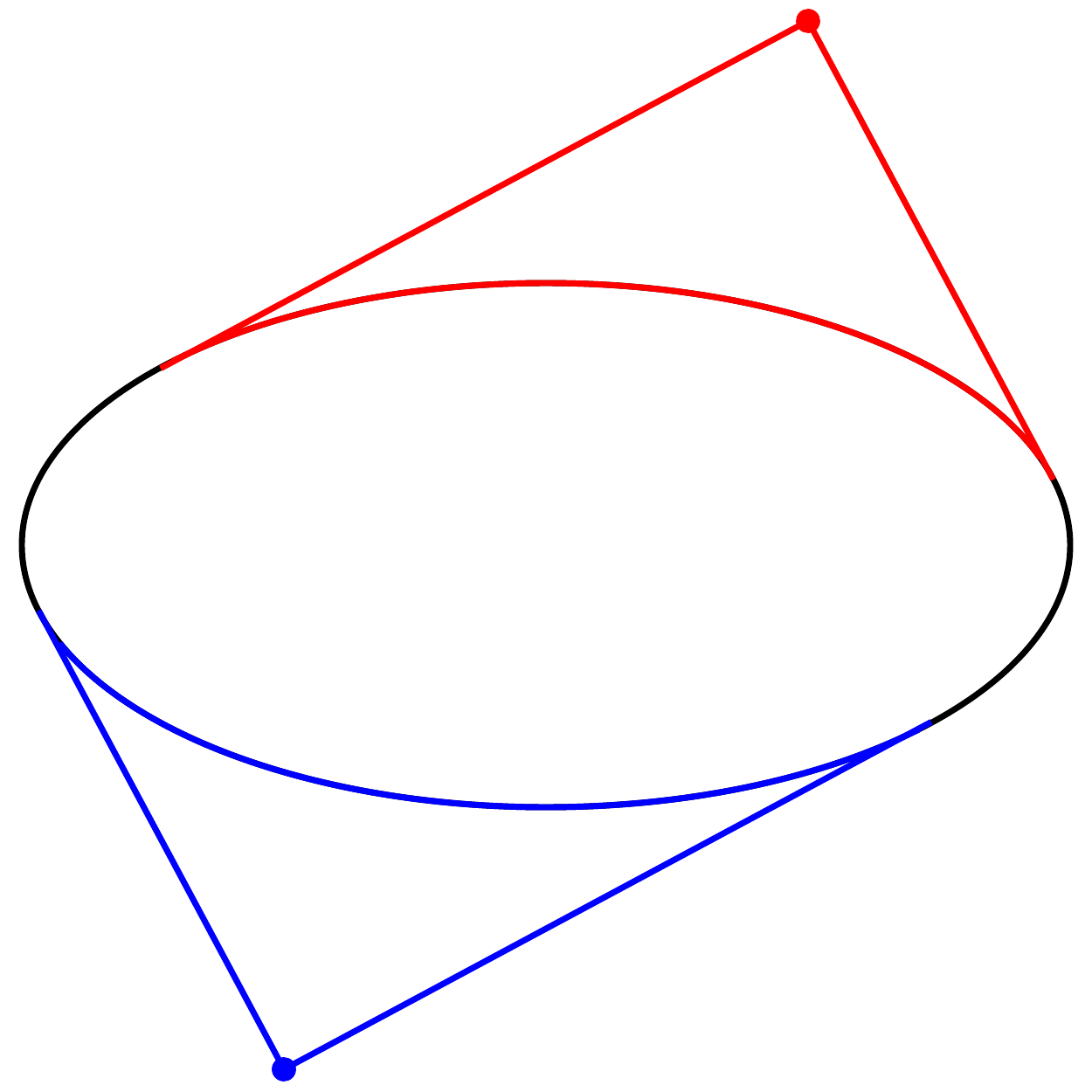}
  \caption{$k(A)=4$}
\end{subfigure}
\caption{Examples of configurations of boundary generating curves for direct sums of a unitarily irreducible $2\times2$ matrix with two scalars, with various Gau--Wu numbers.}
\label{fig:ellipselines}
\end{figure}
	
We observe that a unitarily reducible $4\times4$ matrix with no {singular points on the boundary generating curve}, as in the case of Figure \ref{fig:sfig1a}, will have $k(A)=2$. We formalize this by recalling the notion of {\em seeds} from \cite{CaDeRaSeSpY}.  A seed is a subset of $\partial W(A)$ which is either a flat portion or a singular point of the boundary generating curve. This is equivalent to the notion of the {\em exceptional supporting line} \cite{LLS13} of $W(A)$, a line $\ell$  such that there exist  $z\in W(A)\cap\ell $ with $f^{-1}_A(z)$ containing linearly independent vectors. For matrices $A\in\C^{n\times n}$ of any size $n$, the presence of seeds  (equivalently: the presence of an exceptional supporting line of $W(A)$) implies that $k(A)\geq 3$; see e.g. \cite[Proposition~9]{CaDeRaSeSpY}.  However, unitarily irreducible $A\in\C^{4\times 4}$ with no seeds but $k(A)=3$ do exist. This was observed in \cite[Example~15]{CaDeRaSeSpY} by way of example. Here we treat this phenomenon more systematically.

We will now concentrate on unitarily irreducible matrices. First, let us address the case of maximal possible $k(A)$.

\begin{thm} \label{th:k4} Let $A\in\C^{4\times 4}$. Then $A$ is unitarily irreducible with $k(A)=4$ if and only if, up to a rotation, $A$ is unitarily similar to $H+iK$ (with $H,K$ Hermitian) where either {\rm (a)}
\eq{2+2}   H=\begin{bmatrix} h_1 & 0 & 0 & 0 \\ 0 & h_1 & 0 & 0 \\ 0 & 0 & h_2 & 0 \\ 0 & 0 & 0 & h_2 \end{bmatrix}, \ K=\begin{bmatrix} k_{11} & k_{12} & \sigma_1 & 0 \\ \overline{k_{12}} & k_{22} & 0 & \sigma_2 \\ \sigma_1 & 0 & k_{33} & k_{34} \\
	0 & \sigma_2 & \overline{k_{34}} & k_{44} \end{bmatrix}=:\begin{bmatrix}K_1 & \Sigma \\ \Sigma & K_2\end{bmatrix}, \en
with $\sigma_1>0,\ \sigma_2\geq 0$,  $k_{12} k_{34}\neq 0$ if $\sigma_2=0$, $K_1$ not commuting with $K_2$ if $\sigma_1=\sigma_2$, and at least one of the entries $k_{12}, k_{34}$  is different from zero if $\sigma_1>\sigma_2>0$,

\noindent
or {\rm (b)}
\eq{3+1} H=\begin{bmatrix} h_1 & 0 & 0 & 0 \\ 0 & h_1 & 0 & 0 \\ 0 & 0 & h_1 & 0 \\ 0 & 0 & 0 & h_2 \end{bmatrix}, \ K=\begin{bmatrix} k_1 & 0 & 0 & b_1 \\ 0 & k_2 & 0 & b_2 \\ 0 & 0 & k_3 & b_3 \\
b_1 & b_2 & b_3 & k_4 \end{bmatrix}\in\R^{4\times 4}, \en with pairwise distinct $k_1,k_2,k_3$ and non-zero $b_1,b_2,b_3$. Both in \eqref{2+2} and \eqref{3+1}, $h_1\neq h_2$.
\end{thm}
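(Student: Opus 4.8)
The plan is to combine the characterization of unitarily irreducible matrices with $k(A)=n$ from \cite[Theorem 2.7]{WangWu13} (equivalently, the dichotomous criterion of \cite[Theorem 7]{CaRaSeS}) with the structural analysis carried out in Sections~\ref{s:irred} and~\ref{s:arrow}. Since $k(A)=4=n$ and $A$ is unitarily irreducible, $A$ must be dichotomous: after a rotation $e^{-i\theta}A$ has Hermitian part $H$ with exactly two distinct eigenvalues $h_1\neq h_2$. Up to unitary similarity we may diagonalize $H$, and the multiplicities of $h_1,h_2$ must be $\{2,2\}$ or $\{3,1\}$ (the case $\{1,3\}$ is the same after relabeling, and multiplicity $\{4,0\}$ is excluded since $h_1\neq h_2$ with both attained). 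This produces the two block forms for $H$ in \eqref{2+2} and \eqref{3+1}; writing $A=e^{i\theta}(H+iK)$ with $K=\im(e^{-i\theta}A)$ Hermitian, it remains to determine exactly which $K$ make $A$ unitarily irreducible.

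For case (a), $H=h_1I_2\oplus h_2I_2$ and $K=\begin{bmatrix}K_1&\Sigma\\\Sigma^*&K_2\end{bmatrix}$. First I would reduce $\Sigma$ to a nonnegative diagonal matrix $\diag[\sigma_1,\sigma_2]$ by applying a block-diagonal unitary $U_1\oplus U_2$ (the singular value decomposition of $\Sigma$), which commutes with $H$; reorder so $\sigma_1\geq\sigma_2\geq 0$, and normalize $\sigma_1>0$ (if $\Sigma=0$ the matrix is a direct sum, hence reducible). The remaining freedom is conjugation by $\diag[e^{i\phi_1},e^{i\phi_2}]\oplus\diag[e^{i\phi_1},e^{i\phi_2}]$ and by $\pm1$'s, which I would use to normalize signs. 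Then unitary reducibility of $H+iK$ is equivalent to $K$ having a reducing subspace (since $H$ is already "maximally degenerate"), which one analyzes by hunting for a common invariant subspace of $K$ and $H$. The conditions in the statement ($k_{12}k_{34}\neq 0$ when $\sigma_2=0$; $K_1,K_2$ not commuting when $\sigma_1=\sigma_2$; at least one of $k_{12},k_{34}$ nonzero when $\sigma_1>\sigma_2>0$) are precisely the negations of the degenerate configurations in which a reducing subspace appears: when $\sigma_2=0$ one checks that $\{e_2,e_4\}^\perp$-type or diagonal subspaces become invariant unless the off-diagonal entries of the $K_j$ survive; when $\sigma_1=\sigma_2$, $\Sigma=\sigma_1 I_2$ intertwines $K_1$ and $K_2$ and a reducing subspace exists exactly when they commute (simultaneously diagonalizable together with $\Sigma$); the intermediate case is a short eigenvector computation. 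I expect the bookkeeping here — enumerating the ways a $4\times4$ $2{\times}2$-block matrix can fail to be irreducible — to be the main obstacle, and I would organize it by the multiplicity pattern of $\Sigma$'s singular values.

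For case (b), $H=h_1I_3\oplus h_2$, so $e^{-i\theta}A-h_1 e^{i\theta}\,\text{(scalar)}$ has, after subtracting $h_1$ and rescaling, the form of an arrowhead matrix in the sense of \eqref{ah}: the skew-Hermitian part contributes a matrix whose only off-diagonal entries sit in the last row/column once $K$ is brought to arrowhead form. More precisely, $K$ restricted to the $h_1$-eigenspace can be diagonalized by a unitary fixing $e_4$ and acting on $\Span\{e_1,e_2,e_3\}$, after which $K$ is an arrowhead matrix $\diag[k_1,k_2,k_3]$ bordered by $(b_1,b_2,b_3)$; a diagonal unitary similarity makes the $b_j$ real and nonnegative, and since $H$ is a real scalar on that eigenspace, $A$ becomes (a rotation of) $H+iK$ with $K\in\R^{4\times4}$ arrowhead. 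Now Theorem~\ref{th:dak} applies directly: $A$ satisfies \eqref{offdiag} with the rotated off-diagonal pairs balanced, so $k(A)$ equals the sum of multiplicities of the extreme eigenvalues of $K$; for this to equal $4$ with $A$ irreducible we need (by Proposition~\ref{p:redsuf} and the dichotomy requirement) the $k_j$ pairwise distinct and all $b_j\neq 0$ — if some $b_j=0$ the matrix splits off a scalar and $k(A)=n$ fails generically, and if two $k_j$ coincide Proposition~\ref{p:redsuf}(ii) gives a normal eigenvalue. Conversely, under distinctness of $k_1,k_2,k_3$ and $b_1,b_2,b_3\neq 0$, Theorem~\ref{th:diur} (case~I, $P$ diagonal, here $P=0_3\oplus 1$ or $I_3\oplus 0$) guarantees irreducibility and dichotomy, hence $k(A)=4$ by \cite[Theorem 7]{CaRaSeS}. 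The only subtlety is checking that no extra constraint from Theorem~\ref{th:diur}'s cases (b$'$), (b$''$) intrudes — but those require $P$ of the form \eqref{opah} (a genuine $2\times2$ block), which does not occur when $H$ has eigenvalue multiplicities $3$ and $1$, so case~I of that theorem is the relevant one and no exceptional conditions arise. Assembling these two cases, and noting that the rotation and the unitary similarities used are exactly those allowed in the statement, completes the proof.
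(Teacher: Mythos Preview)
Your plan is essentially the paper's own proof: invoke the dichotomy criterion (\cite[Theorem~7]{CaRaSeS}) to force $\re(e^{-i\theta}A)$ to have exactly two eigenvalues, split into the $(2,2)$ and $(3,1)$ multiplicity patterns, normalize $K$ via the residual block-diagonal unitary freedom (SVD of the off-diagonal block in case~(a), diagonalization of the $3\times3$ corner plus a diagonal phase in case~(b)), and then characterize irreducibility. The paper does exactly this, disposing of case~(b) in one line by appeal to Theorem~\ref{th:diur} and carrying out the invariant-subspace bookkeeping for case~(a) by the same organizing principle you propose (splitting on whether $\sigma_1>\sigma_2$ or $\sigma_1=\sigma_2$).

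One point needs correction. Your appeal to Theorem~\ref{th:dak} in case~(b) is garbled: that theorem (read against its proof) computes $k(A)$ as the sum of the multiplicities of the extreme eigenvalues of the \emph{diagonal} Hermitian part $\re(e^{-i\theta}A)=H$, not of $K$. Here $H=h_1I_3\oplus h_2$, so that sum is $3+1=4$ automatically and says nothing about the $k_j$ or $b_j$; conversely, with $k_1,k_2,k_3$ distinct and $b_j\neq0$ the arrowhead matrix $K$ has four simple eigenvalues, so ``sum of extreme multiplicities of $K$ equals $4$'' is never what you want. Your actual necessity argument for ``$k_j$ pairwise distinct and $b_j\neq0$'' runs through Proposition~\ref{p:redsuf} (parts (i) and (ii), since $b_i\overline{c_j}=b_j\overline{c_i}$ holds automatically for the real symmetric $K$), and that is correct and is exactly what Theorem~\ref{th:diur} packages. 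Simply delete the Theorem~\ref{th:dak} sentence; the rest stands.
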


\begin{proof}
Suppose $A$ is unitarily irreducible and $k(A)=4$. By the definition of $k(A)$, there is an orthonormal basis $\mathcal E$  of $\C^4$ whose image under $f_A$ lies on $\partial W(A)$. According to \cite[Theorem~7]{CaRaSeS}, this image is in fact located on two parallel supporting lines of $W(A)$. Via rotation of $A$, without loss of generality we may suppose that these lines are vertical. Then the matrix of $H:=\re A$ with respect to the basis $\mathcal E$ takes the form given in \eqref{2+2} and \eqref{3+1}.

If $H$ is as in \eqref{2+2}, we use a block diagonal unitary similarity to replace the upper right $2\times2$ block of $K$ by the diagonal matrix $\Sigma$ with the singular values $\sigma_1\geq \sigma_2\geq 0$ of this block on the diagonal, without changing $H$. This is how we arrive at the form $K:=\im A$ in \eqref{2+2}.

For $H$ as in \eqref{3+1}, the upper left $3\times3$ block of $K$ can be diagonalized by a further unitary similarity, leaving $H$ unchanged and thus putting $A$ into an arrowhead form. Yet another (this time, diagonal) unitary similarity can be used to make the entries of the last row and column of $K$ real, and even non-negative, leaving $H$ and the rest of $K$ unchanged.  Thus the form of $K:=\im A$ in \eqref{3+1} emerges.

On the other hand, $H$ as in \eqref{2+2} or \eqref{3+1} guarantees that $k(A)=4$, independent of further restrictions on $K$ and whether or not $A$ is unitarily reducible. It therefore remains to show that these restrictions, as stated in (a) and (b), are equivalent to unitary irreducibility of $A$. In the (b) setting this follows immediately from Theorem~\ref{th:diur}. We thus concentrate on the case of $H,K$ given by \eqref{2+2}.

In this case, $\Sigma=0$ is equivalent to $A$ being normal, and thus implies it is unitarily reducible. Now assume that $\Sigma$ (and thus $\sigma_1$) is nonzero.  Condition $\sigma_2=k_{12}k_{34}=0$ is equivalent to  $H$ and $K$ having a common eigenvector, once again making $A$ unitarily reducible. If $\sigma_1=\sigma_2(\neq 0)$ and $K_1,K_2$ commute, yet another block diagonal unitary similarity can be used to simultaneously diagonalize  $K_1$ and $K_2$, leaving $\Sigma$ unchanged. For the resulting $A$,  $\Span\{e_1,e_3\}$ and  $\Span\{e_2,e_4\}$ are reducing subspaces.

Now we only need to show that $A$ has no 2-dimensional reducing subspaces in the following situations: \\
(i) $\sigma_1>\sigma_2$, and at least one of (both, if $\sigma_2=0$) the entries $k_{12},k_{34}$ is different from zero, and  \\
(ii) $\sigma_1=\sigma_2>0$, $K_1$ and $K_2$ do not commute.

Such a subspace $L$ of $A$  (and its orthogonal complement $L^\perp$), if it exists, should be invariant under all of the following: \eq{inva} \begin{bmatrix}I & 0 \\ 0 & 0\end{bmatrix},\begin{bmatrix}0 & 0 \\ 0 & I\end{bmatrix}, \begin{bmatrix}K_1 & 0 \\ 0 & 0\end{bmatrix}, \begin{bmatrix}0 & 0 \\ 0 & K_2\end{bmatrix}, \begin{bmatrix}0 & \Sigma \\ \Sigma & 0\end{bmatrix}, \begin{bmatrix}\Sigma^2 & 0 \\ 0 & \Sigma^2\end{bmatrix}.\en
In particular, $L$ must contain a vector of the form $x=\begin{bmatrix}\xi\\ 0\end{bmatrix}$, with $\xi\neq0$.

If (i) holds,  $\Sigma^2$ is not a scalar multiple of the identity along with $\Sigma$ itself, and the invariance of $L$ under the first two and the last transformations from the list \eqref{inva} is equivalent to $L$ being the span of two elements of the standard basis. Conditions imposed on $k_{12},k_{34}$ imply, however, that along with any two standard basis vectors, a subspace invariant under $K$ unavoidably contains the other two, which is a contradiction.

If (ii) holds, then along with $x$, the subspace $L$ will also contain the vectors $\begin{bmatrix}K_1\xi\\ 0\end{bmatrix}, \begin{bmatrix} 0\\ \xi\\\end{bmatrix}, \begin{bmatrix} 0\\ K_2\xi\\\end{bmatrix}$. Condition $\dim L=2$ can hold only if $\xi$ is a common eigenvector of $K_1$ and $K_2$, implying that they commute, which is also a contradiction.
\end{proof} 	

With the case $k(A)=4$ out of the way, we need only to figure out how to distinguish between matrices with $k(A)=2$ and $k(A)=3$.
Indeed, recalling the notion of seeds from the beginning of this section, we know that if $A\in \C^{4\times 4}$ is unitarily irreducible and has at least one seed but does not satisfy conditions of Theorem~\ref{th:k4}, then $k(A)=3$.  {However, it is possible for a matrix $A$ without a seed to satisfy $k(A)=3$, as we will see in Theorem \ref{th:kA3}.}

Following Kippenhahn \cite{Ki,Ki08}, we define a general affine transformation $\tau(z)$ for $z = \xi + i\eta$, $\xi, \eta\in \mathbb{R}$ as

\[\tau(z) = \tau_{a,b,c}(\xi+ i\eta) = a\zeta + ib\eta + c,\]
where $a, b, c\in \mathbb{C}$, $ab\neq 0$ and $ba^{-1}$ is not purely imaginary.

The affine transformation is invertible, with $\tau_{a,b,c}^{-1}  =\tau_{m,n,0}\circ \tau_{0,0,-c},$ where $$m = \frac{b_1}{a_1b_1 - a_2 b_2} -i \frac{a_2}{a_1 b_1 + a_2 b_2}\textmd{ and }n =  \frac{a_1}{a_1 b_1 + a_2 b_2}- i\frac{b_2}{a_1b_1 - a_2 b_2},$$ when $ab\neq 0$ and $ab^{-1}$ is not purely imaginary.

Similarly, we define the affine transformation on a matrix $A = H + iK$ as
\[\tau(A) = \tau_{a,b,c}(A) = aH + ibK + cI_n,\]
for $a, b, c\in \mathbb{C}$.  Kippenhahn proves that $\tau(W(A)) = W(\tau(A))$ \cite[Theorem 4]{Ki,Ki08}, so the affine transformation of the matrix corresponds to the affine transformation on the numerical range. This transformation is also invertible.  Two matrices $A$ and $B$ are called \textit{affinely equivalent} when there exists $\tau$ such that $A = \tau(B)$.  Note that such matrices are unitarily (ir)reducible only simultaneously.  

\begin{lem}\label{lem:afftrans}
Let $A\in \C^{n\times n}$ and assume
there are three orthogonal vectors in $\C^n$ mapping to three distinct points $p_1, p_2, p_3$ on $\partial W(A)$ with distinct corresponding support lines $\ell_1, \ell_2, \ell_3$.  Then $A$ is affinely equivalent to a matrix $A'$, where $W(A')$ is in the first quadrant and is either
	\begin{enumerate}
		\item (parallel case) tangent to each of the lines $x=0$,  $y=0$, and $x=1$ or
		\item (nonparallel case) tangent to each of the lines $x=0$, $y=0$, and $x+y=1$.
	\end{enumerate}
In either case, $k(A)=k(A')$.
\end{lem}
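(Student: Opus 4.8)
The plan is to realize the desired normalization as an explicit composition of affine transformations, exploiting the fact that $\tau(W(A))=W(\tau(A))$ from Kippenhahn and that affine maps preserve incidence of support lines with boundary points as well as the value $k(\cdot)$. First I would observe that since the three support lines $\ell_1,\ell_2,\ell_3$ are distinct, at least two of them are non-parallel; relabel so that $\ell_1$ and $\ell_2$ are non-parallel. There are then two cases according to whether $\ell_3$ is parallel to one of $\ell_1,\ell_2$ (the ``parallel case'') or is in ``general position'' with respect to both (the ``nonparallel case''). In the parallel case, say $\ell_3\parallel\ell_1$, I would first apply a rotation (a $\tau_{a,a,0}$ with $|a|=1$) so that $\ell_1$ and $\ell_3$ become vertical; since $W(A)$ is convex and lies between two parallel support lines, one of them lies to the left and the other to the right, and $\ell_2$ is then non-vertical. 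A translation moves the left vertical line to $x=0$; a real scaling in the $x$-direction (a transformation of the form $aH+ibK$ with suitable real $a$, $b=1$, composed with the previous) moves the right vertical line to $x=1$; and then a shear together with a vertical scaling/translation — all realized by an appropriate $\tau_{a,b,c}$ with $ab\ne0$ and $ba^{-1}$ not purely imaginary — can be chosen to send the third support line $\ell_2$ to $y=0$ while keeping $W(A')$ in the closed first quadrant tangent to $x=0$ and $x=1$. The key point is that the group of such $\tau$'s acts transitively enough on triples (one pair of parallel lines, one transversal) to reach the target configuration.

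In the nonparallel case, all three lines are pairwise non-parallel, so $\ell_1\cap\ell_2$, $\ell_2\cap\ell_3$, $\ell_1\cap\ell_3$ are three distinct points forming a (possibly degenerate-looking but actually genuine) triangle, and $W(A)$ is inscribed in this triangle, tangent to all three sides. I would then choose the unique \emph{real} affine map of $\R^2$ carrying this triangle to the standard triangle with sides $x=0$, $y=0$, $x+y=1$; the content to check is that this real-affine map of the plane is induced by a transformation $\tau_{a,b,c}$ of the allowed form. Since a general plane-affine map is $(\xi,\eta)\mapsto(a_1\xi+a_2\eta+c_1,\, b_1\xi+b_2\eta+c_2)$ with invertible linear part, and the allowed $\tau_{a,b,c}$ with $a,b,c\in\C$ give exactly the maps $(\xi,\eta)\mapsto(\re a\,\xi-\im b\,\eta+\re c,\ \im a\,\xi+\re b\,\eta+\im c)$, one sees that every invertible real affine transformation of $\R^2$ is of this form with $ab\ne0$ automatically, and the non-purely-imaginary condition on $ba^{-1}$ is exactly the invertibility (nonzero Jacobian) condition; the paper already records the inverse $\tau_{a,b,c}^{-1}$ under precisely this hypothesis. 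Hence the target normalization is attainable, $W(A')$ is inscribed in the standard triangle and lies in the first quadrant, and because the three original orthogonal vectors still map to the three (transformed) boundary points on the three standard support lines of $W(A')$, and $k$ is an affine invariant, $k(A)=k(A')$.

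The main obstacle, and the step I would spend the most care on, is the bookkeeping that certifies the required plane-affine map really is induced by an allowed $\tau_{a,b,c}$ — i.e., matching the real $2\times2$ linear part $\begin{bmatrix}\re a & -\im b\\ \im a & \re b\end{bmatrix}$ to an arbitrary invertible real matrix and verifying $ab\ne 0$ and $ba^{-1}\notin i\R$. Once this identification is in hand, both cases reduce to the elementary fact that an invertible plane-affine map is determined by, and can be prescribed on, three non-collinear points (nonparallel case) or by a compatible pair of parallel lines and a transversal (parallel case), so the geometric normalization is routine. I would close by noting that convexity of $W(A')$ forces it to lie on the correct side of each of the three support lines, so after possibly composing with the reflection-free sign adjustments already built into the choice of $a,b$, $W(A')$ sits in the first quadrant as claimed, and $k(A')=k(A)$ since affine equivalence transports the orthonormal system realizing $k(A)$ to one realizing $k(A')$ and vice versa.
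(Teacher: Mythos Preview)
Your approach is correct and takes a cleaner, more conceptual route than the paper. The paper constructs the needed $\tau$'s explicitly in stages: first it writes down a specific $\tau_{a,b,m}$ carrying the axes to $\ell_1,\ell_2$, then adjusts by a quarter-turn to land in the first quadrant, and finally composes with an explicit diagonal $\tau_{1,k,0}$ or $\tau_{e/i,d,0}$ to handle $\ell_3$. You instead make the single structural observation that the admissible $\tau_{a,b,c}$ are \emph{exactly} the invertible real affine maps of the plane (via the linear part $\bigl[\begin{smallmatrix}\re a & -\im b\\ \im a & \re b\end{smallmatrix}\bigr]$, with the ``$ba^{-1}\notin i\R$'' condition being precisely nonvanishing of the determinant), and then appeal to transitivity of this group on the relevant line configurations. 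This buys you a shorter argument and makes the invertibility bookkeeping transparent; the paper's version buys explicit formulas but is otherwise doing the same thing.

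Two small points worth tightening. First, the claim that $W(A)$ is ``inscribed in this triangle'' need not hold: three pairwise non-parallel support lines can be tangent to a convex body with the body lying in one of the unbounded edge-regions rather than the bounded triangle. This does not break your argument --- after sending the three lines to $x=0$, $y=0$, $x+y=1$ you can compose with an element of the $D_3$ symmetry group of the standard triangle (which, by your own identification, consists of allowed $\tau$'s, reflections included) to move $W(A')$ into the first quadrant --- but ``reflection-free sign adjustments'' understates what may be needed. Second, for $k(A)=k(A')$: the orthonormal system is not ``transported'' at all; $\tau$ acts on the matrix and leaves $\C^n$ untouched, so the \emph{same} vectors $x_j$ satisfy $f_{A'}(x_j)=\tau(f_A(x_j))\in\tau(\partial W(A))=\partial W(A')$. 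Phrasing it that way makes the invariance immediate.
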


\begin{proof}
At most two of $\ell_1, \ell_2,$ and $\ell_3$ are parallel since $W(A)$ is convex.  Without loss of generality assume that $\ell_1$ and $\ell_2$ are not parallel, and let $m=m_1+m_2i$ denote the intersection point of $\ell_1$ and $\ell_2$.  Then $\ell_1=\{a_1\alpha+a_2\alpha i + m\mid \alpha\in \R\}$ (i.e. the line through $m$ with slope $a_2/a_1$, for some real $a_1,a_2$) and similarly $\ell_2=\{b_1\gamma+b_2\gamma i + m\mid \gamma\in \R\}$.  Let $a=a_2-a_1i$ and $b=b_1+b_2i$, so then $ia=a_1+a_2i$.  Let $\tau$ denote the affine transformation given in Kippenhahn's notation by $\tau_{a,b,m}$, so that for any $\xi+i\eta\in \C$, we have $\tau(\xi+i\eta)=\xi b+i\eta a +m$. We note that Kippenhahn's requirement for invertibility that $ba^{-1}$ is not purely imaginary is guaranteed by the fact that $ia$ and $b$ are not (real) scalar multiples of each other (since $\ell_1$ and $\ell_2$ are not parallel) \cite{Ki,Ki08}.
	
We will now express $\ell_1$ and $\ell_2$ as images of lines under the affine transformation $\tau$.  Specifically, $\ell_2=\{b\gamma+m\mid \gamma\in \R\}=\{\tau(\gamma+0i)\mid \gamma\in \R\}$.  Similarly $\ell_1=\{\tau(0+\alpha i)\mid \alpha\in \R\}$.  Hence $\tau$ maps the $y$-axis to $\ell_1$ and the $x$-axis to $\ell_2$.  Hence $A$ is affinely equivalent to some matrix $B$ for which $W(B)$ is supported by the $x$-axis and the $y$-axis.  By further applying a scaling of $\pm i$ or $\pm 1$, we can guarantee that $W(B)$ lies in the first quadrant and we assume this without loss of generality.
	
We will now shift our focus to $\ell_3$.  In the case that $\ell_3$ is parallel to (without loss of generality) $\ell_1$, we see that $\tau$ maps some line $x=k$ (with $k>0$) to the line $\ell_3$.  Composing our previous $\tau$ with $\tau_{1,k,0}$, we have $\tau\circ \tau_{1,k,0}(\xi + i\eta) = k\xi b + i\eta a + m$	maps $x=1$ to $\ell_3$ while preserving the map of the $y$-axis to $\ell_1$ and the $x$-axis to $\ell_2$.  We conclude that $A$ is affinely equivalent to some matrix $C$ whose numerical range $W(C)$ lies in the first quadrant and has support lines of the $y$-axis, the $x$-axis, and the line $x=1$. This is the parallel case.
	
We now consider the remaining case that the lines $\ell_1,\ell_2,\ell_3$ are pairwise non-parallel.  Let $d$ denote the intersection point of $\ell_3$ and $\ell_2$ (so $d\in \R$) and $e$ the intersection point of $\ell_3$ and $\ell_1$ (so $e$ is purely imaginary).  Then the affine transformation given by $\tau_{e/i,d,0}$ sends the line $\{\gamma+(1-\gamma)i\mid \gamma\in \R\}$ to $\ell_3$.  As in the previous paragraph, $\tau_{e/i,d,0}$ does not affect $\ell_1$ and $\ell_2$.  We conclude that $A$ is affinely equivalent to some matrix $C$ whose numerical range $W(C)$ lies in the first quadrant and has the $x$-axis, the $y$-axis, and the line $x+y=1$ as support lines.
	
In both cases, $k(A)$ is preserved by the affine transformations since there exists some orthonormal set $\{{y}_1, {y}_2, {y}_3\}\in \C^n$ such that ${y}_1^*C{y}_1$ is on the $x$-axis, ${y}_2^*C{y}_2$ is on the $y$-axis, and ${y}_3^*C{y}_3$ is on the line $x=1$ or $x+y = 1$, depending on the case \cite[Proof of Theorem 4]{Ki,Ki08}.  Therefore $C$ is the desired matrix $A'$.
\end{proof}

\begin{thm}\label{th:kA3}
A $4\times4$ matrix $A$ is unitarily irreducible and has $k(A)=3$ arising from three points not on seeds if and only if, up to affine equivalency, it is unitarily similar to $H+iK$ of the following structure.
Either \\ \noindent {\rm (a)}
\begin{equation}\label{parallel}
H = \left[\begin{array}{rrrr}
0 & 0 & 0 & 0\\
0 & h_{22} & 0 & h_{24}\\
0 & 0 & 1 & 0\\
0 & \overline{h_{24}}  & 0 & h_{44} \end{array}\right], \quad
K =  \left[\begin{array}{rrrr}
k_{11} & 0 & k_{13}  & k_{14}\\
0 & 0 & 0 & 0\\
\overline{k_{13}} & 0 & k_{33} & k_{34} \\
\overline{k_{14}} &  0  &\overline{k_{34}}& k_{44}
\end{array}\right],
\end{equation}
\eq{par1} 0<\begin{bmatrix}h_{22} & h_{24}\\ \overline{h_{24}} & h_{44}\end{bmatrix}<I,\quad
0<\begin{bmatrix} k_{11} & k_{13} & k_{14}\\ \overline{k_{13}}  & k_{33} & k_{34} \\
	\overline{k_{14}}  &\overline{k_{34}}& k_{44}\end{bmatrix}, \en
\eq{par2} h_{24}\neq 0 \text{ and at most one of } k_{13}, k_{14}, k_{34} \text{ is equal to zero}, \en

or  \\ \noindent {\rm (b)}

\begin{equation}\label{nonparallel}
H = \left[\begin{array}{rrrr}
0 & 0 & 0 & 0\\
0 & h_{22} & 0 & h_{24}\\
0 & 0 & h_{33} & h_{34}\\
0 & \overline{h_{24}} & \overline{h_{34}} & h_{44}
\end{array}\right], \quad
K =  \left[\begin{array}{rrrr}
k_{11} & 0 & 0 & k_{14}\\
0 & 0 & 0 & 0\\
0 & 0 & k_{33} & -h_{34}\\
\overline{k_{14}} &  0  &-\overline{h_{34}}& k_{44}
\end{array}\right],
\end{equation}
\eq{nonpar1} 0<\begin{bmatrix} h_{22} & 0 & h_{24}\\ 0 & h_{33} & h_{34} \\
\overline{h_{24}} & \overline{h_{34}} & h_{44}\end{bmatrix},\,  0<\begin{bmatrix} k_{11} & 0 & k_{14}\\ 0 & k_{33} & -h_{34} \\
\overline{k_{14}} & -\overline{h_{34}} & k_{44}\end{bmatrix},
 \begin{bmatrix} k_{11} & 0 & k_{14}\\ 0 & h_{22} & h_{24} \\
	\overline{k_{14}} & \overline{h_{24}} & h_{44}+k_{44}\end{bmatrix}<(h_{33}+k_{33})I,\en
and \eq{nonpar2} h_{24}, h_{34}, k_{14}\neq 0. \en
\end{thm}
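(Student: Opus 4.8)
The plan is to reduce the theorem to Lemma~\ref{lem:afftrans} and then carry out a normal-form analysis, exactly parallel to the proof of Theorem~\ref{th:k4}, splitting according to whether the three support lines are parallel (case (a)) or pairwise non-parallel (case (b)). Suppose $A\in\C^{4\times4}$ is unitarily irreducible with $k(A)=3$, witnessed by orthonormal $x_1,x_2,x_3$ with $p_j=f_A(x_j)\in\partial W(A)$ not lying on seeds. Since the $p_j$ are not on seeds, each $p_j$ is a smooth non-flat boundary point, so it has a unique support line $\ell_j$, and the three $\ell_j$ are pairwise distinct (two coinciding would force $p_1,p_2$ to lie on the same exceptional supporting line, hence be seeds). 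Lemma~\ref{lem:afftrans} then lets us replace $A$ by an affinely equivalent $A'$ with $W(A')$ in the first quadrant, tangent either to $x=0,\,y=0,\,x=1$ (parallel case) or to $x=0,\,y=0,\,x+y=1$ (non-parallel case), with $k(A')=k(A)=3$ and $A'$ still unitarily irreducible.

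Next I would extract the matrix normal form from tangency. In the parallel case, write $A'=H+iK$ with $H=\re A'$. Tangency of $W(A')$ to $x=0$ and $x=1$ means $0$ and $1$ are the extreme eigenvalues of $H$; since $x_1$ (mapped to $x=0$) and $x_3$ (mapped to $x=1$) are orthonormal eigenvectors of $H$ at its min and max, and tangency to $y=0$ means $0=\min$ eigenvalue of $K$ with $x_2$ a corresponding eigenvector orthogonal to $x_1,x_3$. Choosing the orthonormal basis $\{x_1,x_3,x_2,e_4\}$ (with $e_4$ completing it) puts $H$ in the block form $0\oplus 1\oplus(\text{$2\times2$ block})$ at positions $1,3$ and $\{2,4\}$, i.e. exactly the shape of \eqref{parallel}, and puts $K$ in the shape with zero second row and column (since $x_2$, the second basis vector, is a null eigenvector of $K$ and $K\succeq 0$ forces the whole second row/column to vanish). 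The strict inequalities \eqref{par1} come from: the restriction of $W(A')$ to the strip $0<x<1$ being nondegenerate (so the $2\times2$ $H$-block has both eigenvalues strictly between $0$ and $1$, using that $A'$ is irreducible hence has no repeated extreme $H$-eigenvalue issues beyond the prescribed multiplicities), and $K\succeq 0$ with $0$ a simple-enough eigenvalue. In the non-parallel case the analogous bookkeeping — now the third support line is $x+y=1$, so $H+K$ has max eigenvalue $1$ attained at $x_3$, which after the basis choice $\{x_1,x_2,x_3,e_4\}$ forces the shared off-diagonal entry $-h_{34}$ in $K$ matching $h_{34}$ in $H$ so that the $(3,4)$ entry of $H+K$ vanishes — yields \eqref{nonparallel} and \eqref{nonpar1}.

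The remaining and genuinely delicate part, just as in Theorem~\ref{th:k4}, is pinning down the irreducibility conditions \eqref{par2} and \eqref{nonpar2}: showing that, given $H$ and $K$ of the stated shape satisfying the positivity constraints, unitary irreducibility is equivalent to $h_{24}\neq0$ together with ``at most one of $k_{13},k_{14},k_{34}$ vanishes'' (parallel) resp. $h_{24},h_{34},k_{14}\neq0$ (non-parallel). The ``only if'' direction is easy: if $h_{24}=0$ then $e_2$ is a common eigenvector of $H$ and $K$ (both being block-diagonal in the right splitting), so $A$ is reducible; similarly if two of $k_{13},k_{14},k_{34}$ vanish one checks a coordinate subspace is reducing. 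For the ``if'' direction I would argue as in Theorem~\ref{th:k4}: a hypothetical proper reducing subspace $L$ (and $L^\perp$) is invariant under $H$, $K$, and all their products; invariance under the prescribed block structure of $H$ forces $L$ to be spanned by coordinate vectors or by eigenvectors of the small $H$-blocks, and then the stated nonvanishing conditions on the entries of $K$ make $K$ act irreducibly on any such candidate, a contradiction — here one uses that the $2\times2$ (parallel) or $3\times3$ (non-parallel) positive-definite $H$-block has distinct eigenvalues, which follows from the strict interior position of $W(A')$ in the strip and the irreducibility assumption. Conversely, for the ``sufficiency'' half of the theorem I must verify that any $H,K$ of these forms satisfying \eqref{par1}--\eqref{par2} (resp. \eqref{nonpar1}--\eqref{nonpar2}) actually gives $k(A)=3$ with the three points off seeds: that $k(A)\geq3$ is automatic from $0,1$ being extreme eigenvalues of $H$ and $0$ an eigenvalue of $K$ with an eigenvector orthogonal to the extreme $H$-eigenvectors, and that $k(A)\leq3$ follows because $A$ does not meet the $k(A)=4$ criterion of Theorem~\ref{th:k4} (the block structure here is incompatible with \eqref{2+2} or \eqref{3+1}), while the absence of seeds needs the strictness in \eqref{par1}, \eqref{nonpar1} to rule out flat portions or singular points. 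The main obstacle is this last irreducibility/seed-freeness bookkeeping: keeping track of exactly which degenerate configurations of the $K$-entries produce a reducing subspace or a seed, and confirming that the listed conditions are precisely the complement of all of them.
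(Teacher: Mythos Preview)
Your overall plan mirrors the paper's proof closely: apply Lemma~\ref{lem:afftrans}, read off the normal forms \eqref{parallel}/\eqref{nonparallel} from the three tangencies, and then separately check irreducibility and the value of $k(A)$. That part is fine, and your discussion of how \eqref{par2}, \eqref{nonpar2} arise as exactly the conditions blocking obvious reducing subspaces is on target (though in case~(b) the paper exploits the arrowhead structure via Lemma~\ref{l:red} rather than an ad hoc chase, which is cleaner than what you sketch).

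There is, however, a genuine gap in your sufficiency argument for $k(A)\leq 3$. You write that $k(A)\neq 4$ ``because $A$ does not meet the $k(A)=4$ criterion of Theorem~\ref{th:k4} (the block structure here is incompatible with \eqref{2+2} or \eqref{3+1}).'' This does not work: the characterization in Theorem~\ref{th:k4} is \emph{up to rotation and unitary similarity}, so it is not enough that your particular $H,K$ fail to match those templates. What must actually be shown is that $A$ is not dichotomous, i.e.\ that for every real $t$ the Hermitian matrix $K+tH$ has at least three distinct eigenvalues. In case~(b) the paper handles this quickly because $K+tH$ is an arrowhead matrix with nonzero off-diagonal entries for $t\neq 0,1$, forcing simple extremal eigenvalues. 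In case~(a), however, this step requires real work: the paper analyzes the matrix \eqref{kth} and rules out a $2{+}2$ eigenvalue pattern by showing (using $h_{24}\neq 0$ and the ``at most one of $k_{13},k_{14},k_{34}$ vanishes'' clause of \eqref{par2}) that a certain off-diagonal $3\times 3$ minor of $K+tH-\lambda I$ vanishes for at most one admissible $\lambda$, and rules out a $3{+}1$ pattern by a rank bound. None of this follows from a superficial block-shape comparison, and you should expect to reproduce an argument of this kind.

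A smaller point: your remark that the relevant $H$-sub-blocks have distinct eigenvalues ``by the irreducibility assumption'' is circular in the sufficiency direction. The paper's irreducibility proof in case~(a) does not need this; it instead uses that $0$ is a \emph{simple} eigenvalue of $K$ (from the strict positivity in \eqref{par1}) and that $0,1$ are simple eigenvalues of $H$, then chases standard basis vectors through $H$ and $K$ using \eqref{par2}.
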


\begin{proof}
{\sl Necessity.}  Conditions imposed on $A$ imply that there are three orthonormal vectors $f_j\in\C^4$ whose images under $f_A$ lie on three distinct supporting lines $\ell_j$ ($j=1,2,3$, respectively) of $W(A)$. There are two possibilities, depending on whether or not two of them are parallel. {We complete} the set $\{f_j\}_1^3$ to create an orthonormal basis $\mathcal F$ of $\C^4$, and consider these two possibilities separately.

{\sl Case 1.} Two of the lines, say $\ell_1$ and $\ell_3$, are parallel. According to Lemma~\ref{lem:afftrans} we may without loss of generality suppose that $W(A)$ lies in the first quadrant, $\ell_1$ and $\ell_2$ are the $x$- and $y$-axes, respectively, and $\ell_3$ is the vertical line with the abscissa $x=1$. It is clear that in the basis $\mathcal F$, the matrix of $A$ (after the affine transformation) takes the form \eqref{parallel}. Moreover, the absence of seeds {guarantees that the relevant eigenvalues are simple, hence the strict inequalities in \eqref{par1}}. In its turn, \eqref{par2} is required for unitary irreducibility. Indeed, $f_1, f_2$ or $f_3$ is a common eigenvector of $H,K$ from \eqref{parallel} if, respectively, $k_{13}=k_{14}=0$,  $h_{24}=0$, or $k_{13}=k_{34}=0$. Finally, if $k_{14}=k_{34}=0$, then $\Span\{f_2,f_4\}$ is invariant under both $H$ and $K$. So, (a) holds.

{\sl Case 2.} No parallel pairs of lines among $\ell_j$, $j=1,2,3$. Another application of Lemma~\ref{lem:afftrans} allows us to suppose without loss of generality that $W(A)$ lies in the first quadrant, $\ell_1$ is the $x$-axis, $\ell_2$ is the $y$-axis, and $\ell_3$ intersects the two at the same distance from the origin. The matrix of $A$ in the basis $\mathcal F$ after the respective affine transformation then satisfies \eqref{nonparallel}; note that $h_{34}+k_{34}=0$ comes from $H+K$ being proportional to the Hermitian part of $Ae^{i\pi/4}$.
Conditions \eqref{nonpar1} are equivalent to $\ell_j$ containing no seeds, and in absence of \eqref{nonpar2} $H$ and $K$ would have common eigenvectors. So, (b) holds.

{\sl Sufficiency.}  Given \eqref{parallel}--\eqref{par1} or \eqref{nonparallel}--\eqref{nonpar1}, the points $f_A(e_j)$ (with $\{e_j\}$ denoting the standard basis vectors) lie on the boundary of $W(A)$.
Consequently, $k(A)\geq 3$.

To show that $A$ is unitarily irreducible under (a), observe that $e_2$ is an eigenvector of $K$ corresponding to its simple eigenvalue, and therefore for any reducing subspace $L$ of $A$,  either $e_2\in L$ or $e_2\in L^\perp$. Without loss of generality, suppose the former. Then also $He_2\in L$, implying that $e_4\in L$ since $h_{24}\neq 0$. Next, $Ke_4\in L$, implying that $k_{14}e_1{+k_{34}}e_3\in L$. If $k_{14}=0$, then ${k_{34}}\neq 0$ due to \eqref{par2}, and so $e_3\notin L^\perp$. Being an eigenvector of $H$ corresponding to its simple eigenvalue, $e_3$ then has to lie in $\mathcal L$. From $Ke_3\in L$, we then conclude that $e_1\in L$ (since $k_{13}\neq 0$ due to \eqref{par2}), and $ L=\C^4$. Similarly, if $k_{14}\neq 0$, then $e_1\notin L^\perp$, implying $e_1\in L$, and thus ${k_{34}}e_3\in L$. Also, $Ke_1\in L$ along with $e_1$, and so $k_{13}e_3\in L$. Since at least one of the coefficients $k_{13},{k_{34}}$ is non-zero, we conclude that $e_3\in L$, again implying that $ L=\C^4$.

Under (b), we are dealing with arrowhead matrices. By taking a linear combination of $H$ and $K$ such that the three diagonal entries are distinct, any eigenvector orthogonal to $e_4$ must be one of $e_1,e_2,e_3$; conditions \eqref{nonpar2} thus guarantee there are no common eigenvectors of $H$ and $K$ orthogonal to $e_4$. At the same time, for any common invariant subspace $ L$ of $H$ and $K$, since $e_1$ is a simple eigenvector of $H$ it must be contained in either $ L$ itself or its orthogonal complement $ L^\perp$.  By Lemma~\ref{l:red}, $ L=\C^4$ or $ L^\perp=\C^4$. Hence, $A$ is unitarily irreducible.

Moreover, still under (b), for any $t\in\R$ \[ K+tH=\begin{bmatrix}k_{11} & 0 & 0 & k_{14}
\\ 0 & th_{22} & 0 & th_{24} \\ 0 & 0 & k_{33}+th_{33} & {(t-1)}h_{34} \\
\overline{k_{14}} & t\overline{h_{24}} & {(t-1)}\overline{h_{34}} & k_{44}+th_{44}
 \end{bmatrix}  \]
is an arrowhead Hermitian matrix. Its extremal eigenvalues are therefore simple for all values $t\neq 0,1$ due to
\eqref{nonpar2}.
In particular, $k(A)$ cannot equal 4, and thus it is indeed 3.

Returning to setting (a), we have
\eq{kth} K+tH=\begin{bmatrix} k_{11} & 0 & k_{13} & k_{14}
\\ 0 & t h_{22} & 0 & th_{24} \\ \overline{k_{13}} & 0 & k_{33}+t & k_{34} \\
\overline{k_{14}} & t\overline{h_{24}} & \overline{k_{34}} & k_{44}+t h_{44}\end{bmatrix}.  \en
This matrix cannot have only two distinct eigenvalues. Indeed,
if this were the case, both of them would be the extremal eigenvalues of \eqref{kth} and thus differ from $th_{22}$, as guaranteed by the condition $h_{24}\neq 0$ from \eqref{par2}. The remaining part of \eqref{par2} implies then that the left lower $3\times3$ minor of $K+tH-\lambda I$ vanishes for at most one value of $\lambda$. Therefore, having two multiple eigenvalues of multiplicity two is impossible.

On the other hand, a triple eigenvalue also is not an option, because the rank of $K+tH-\lambda I$ never falls below two.
\end{proof}

Note that in case (b) we actually proved a stronger claim: since there are no seeds, the only $z_j=\scal{Af_j,f_j}\in\partial W(A)$ generated by orthonormal triples $\{f_j\}_{j=1}^3$ lie on three distinct supporting lines of $W(A)$. In case (a), however, seeds are not ruled out. An obvious way in which they can occur is when the matrix $K$ in \eqref{parallel} has a multiple maximal eigenvalue. Other cases, if any, materialize when the matrix \eqref{kth} has a multiple extremal eigenvalue $\lambda$. Further analysis shows that this happens for a unique choice of
\eq{tl} t= k_{11}-k_{33}+\frac{k_{13}k_{34}}{k_{14}}-\frac{\overline{k_{13}}k_{14}}{k_{34}}, \quad \lambda=k_{11}-\frac{\overline{k_{13}}k_{14}}{k_{34}} \en
if and only if $k_{13}\overline{k_{14}}k_{34}\in\R\setminus\{0\}$, $\frac{k_{13}k_{34}}{k_{14}}+\frac{\overline{k_{13}}k_{14}}{k_{34}}$ and
$th_{22}-\lambda$ are of the same sign, and
\eq{45}
\left(\frac{k_{13}k_{34}}{k_{14}}+\frac{\overline{k_{13}}k_{14}}{k_{34}}\right)\det
\begin{bmatrix}th_{22}-\lambda & th_{24} \\ t\overline{h_{24}} & k_{44}+th_{44}-\lambda \end{bmatrix}
=(th_{22}-\lambda)\left(\abs{k_{14}}^2+\abs{k_{34}}^2\right).
\en
In particular, there are no seeds in case (a) if (exactly) one of $k_{13},k_{14},k_{34}$ is equal to zero. We demonstrate the possibility of seeds in case (a) in the following example.

\begin{figure}[h]
\includegraphics[width=1in]{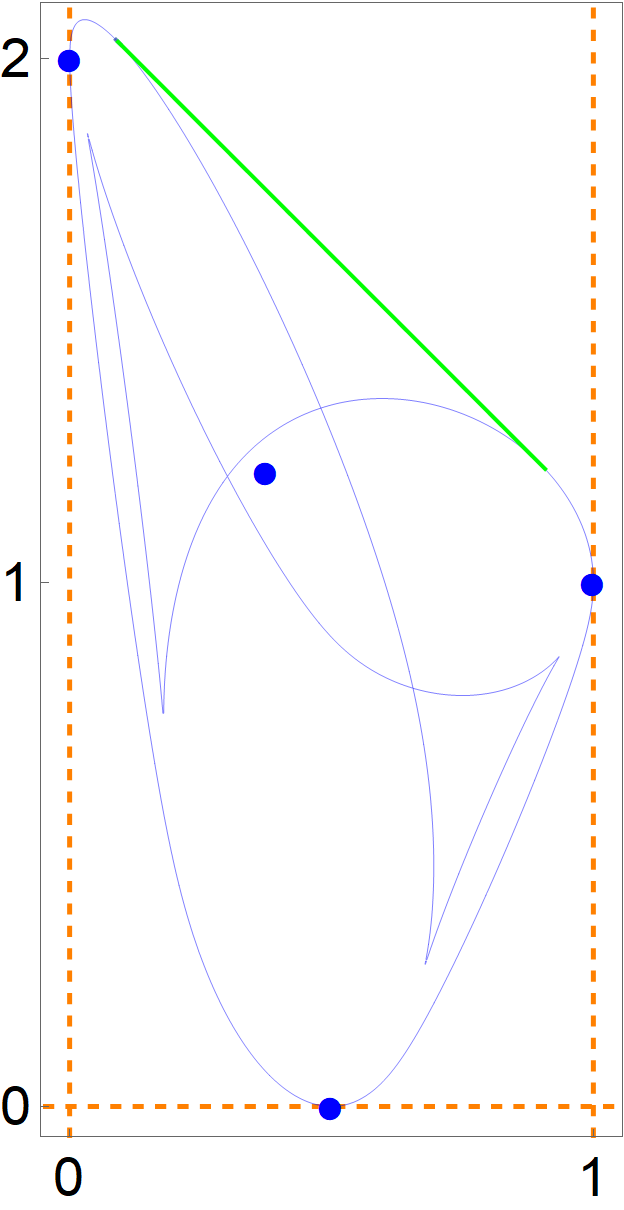}
\caption{The boundary generating curve and seed of the matrix in Equation \eqref{k3mat}.
}
\label{fig:a1}
\end{figure}

\begin{ex}
The matrix \eq{k3mat}\left[
\begin{array}{cccc}
2 i & 0 & \frac{i}{8} & \frac{i}{4} \\
0 & \frac{1}{2} & 0 & -\frac{1}{4} \\
\frac{i}{8} & 0 & 1+i & -\frac{i}{4} \\
\frac{i}{4} & -\frac{1}{4} & -\frac{i}{4} & \frac{3}{8}+\frac{63 i}{52} \\
\end{array}
\right]\en is of the form found in case (a) of Theorem \ref{th:kA3}.  This matrix satisfies \eqref{tl} and \eqref{45} with $t=1$ and $\lambda=17/8$.  Its boundary generating curve is in Figure \ref{fig:a1}, where the dotted lines are the support lines $x=0, y=0, x=1$ and the marked points are the images of $e_1,e_2,e_3,e_4$.
 Observe that the numerical range has a seed in the form of a flat portion, shown by the line segment at the top, which gives rise to a second set of three orthonormal vectors mapping to the boundary of the numerical range.
\end{ex}

We conclude this paper with an example to illustrate that the Gau--Wu number is not stable with respect to the entries of the matrix.

\begin{ex}Let $A$ denote a unitarily irreducible matrix of the form \eqref{ah}, satisfying $b_j=\overline{c_j}$ for all $j$ and satisfying the conditions of Theorem \ref{th:kA3}. Then $k(A)=3$.  By Theorem \ref{th:arrow}, a small perturbation to change some of these entries (say by adding some small $\epsilon$ to $b_j$, for each $j\in J_0$) will result in a matrix with Gau--Wu number 2.
\end{ex}

\bibliographystyle{amsplain}
\bibliography{master}

\end{document}